\documentclass{mfat}
\pagespan{210}{219}

\usepackage{mmap}
\usepackage[utf8]{inputenc}
\usepackage[all]{xy}     

\newtheorem{theorem}[subsection]{Theorem}
\newtheorem{lemma}[subsection]{Lemma}

\newtheorem*{remark*}{Remark}

\newtheorem{definition}[subsection]{Definition}

\makeatletter \@addtoreset{equation}{section}
\@addtoreset{figure}{section} \@addtoreset{table}{section}
\makeatother

\makeatletter
\newcommand\RRR{\mathbb{R}}

\newcommand\FFF{\mathcal{F}}
\newcommand\id{\mathrm{id}}

\begin{document}

\title[Actions of finite groups and smooth functions on surfaces]
    {Actions of finite groups and smooth functions on surfaces}

\author{Bohdan Feshchenko}
\address{Institute of Mathematics, National Academy of
Sciences of Ukraine, 3 Teresh\-chenkivs'ka, Kyiv, 01601, Ukraine}
\email{fb@imath.kiev.ua}

\subjclass[2000]{57S05, 57R45, 37C05}
\date{20/05/2016}
\keywords{Diffeomorphism, Morse function}

\begin{abstract}
    Let $f:M\to \mathbb{R}$ be a Morse function on a smooth closed surface, $V$ be a connected component of some critical level of $f$, and $\mathcal{E}_V$ be its atom. Let also $\mathcal{S}(f)$ be a stabilizer of the function $f$ under the right action of the group of diffeomorphisms $\mathrm{Diff}(M)$ on the space of smooth functions on $M,$ and $\mathcal{S}_V(f) = \{h\in\mathcal{S}(f)\,| h(V) = V\}.$  The group $\mathcal{S}_V(f)$ acts on the set $\pi_0\partial \mathcal{E}_V$ of connected components of the boundary of $\mathcal{E}_V.$ Therefore we have a  homomorphism $\phi:\mathcal{S}(f)\to \mathrm{Aut}(\pi_0\partial \mathcal{E}_V)$. Let also $G = \phi(\mathcal{S}(f))$ be the image of $\mathcal{S}(f)$ in $\mathrm{Aut}(\pi_0\partial \mathcal{E}_V).$

    Suppose that the inclusion  $\partial \mathcal{E}_V\subset M\setminus V$  induces a bijection $\pi_0 \partial \mathcal{E}_V\to\pi_0(M\setminus V).$
    Let $H$ be a subgroup of $G.$  We present a sufficient condition for existence of a section $s:H\to \mathcal{S}_V(f)$ of the homomorphism $\phi,$ so,  the action of $H$ on $\partial \mathcal{E}_V$ lifts to the $H$-action on $M$ by $f$-preserving diffeomorphisms of $M$.

    This result holds for a larger class of smooth functions $f:M\to \mathbb{R}$
    having the following property: for
    each critical point $z$ of $f$ the germ of $f$ at $z$ is smoothly equivalent to a homogeneous polynomial $\mathbb{R}^2\to \mathbb{R}$
    without multiple linear factors.
\end{abstract}

\maketitle

\section{Introduction}
Let $M$ be a smooth compact  surface. The group of diffeomorphisms  $\mathcal{D}(M)$ of $M$ acts on the space of smooth functions $C^{\infty}(M)$ by the  rule
\begin{equation}\label{equ:main-act}
C^{\infty}(M)\times \mathcal{D}(M)\to C^{\infty}(M),\quad (f,h) = f\circ h.
\end{equation}
The set
$\mathcal{S}(f) = \{h\in\mathcal{D}(M)\;|\; f\circ h = f\}$ is called the {\it stabilizer} of  the function $f$ under   action (\ref{equ:main-act}). Endow  $C^{\infty}(M)$ and  $\mathcal{D}(M)$ with the corresponding   Whitney topologies. The topology on $\mathcal{D}(M)$ induces a certain topology on the stabilizer $\mathcal{S}(f).$

Let $\FFF(M)\subset C^{\infty}(M,\RRR)$ be the set of smooth functions   satisfying the following two conditions:
\begin{itemize}
    \item[(B)]
    the function $f$ takes a constant value at each connected component of $\partial M$, and all critical points of $f$ belong to the interior of $M$;

    \item[(P)]
    for each critical point $x$ of $f$ the germ $(f,x)$ of $f$ at $x$ is smoothly equivalent to some homogeneous polynomial $f_x:\RRR^2\to \RRR$ without multiple linear factors.
\end{itemize}

It is well-known that each homogeneous polynomial $f_x:\RRR^2\to \RRR$ splits into a product of linear and irreducible over $\RRR$ quadratic factors.
Condition (P) means that
\begin{equation}\label{eq:homo}
f_x = \prod_{i = 1}^{n} L_{i}\cdot \prod_{j = 1}^{m} Q_{j}^{q_j},
\end{equation}
where
$L_i(x,y) = a_ix+b_iy$ is a linear form, $Q_j = c_jx^2+d_jxy+e_jy^2$ is an irreducible quadratic form such that $L_i/L_{i'}\neq \mathrm{const}$ for $i\neq i'$, and $Q_j/Q_{j'}\neq \mathrm{const}$ for $j\neq j'$.
So, if $\deg f_x\geq 2$, then $0$ is an isolated critical point of $f_x.$

Recall that if $f:(\mathbb{C},0)\to (\RRR,0)$  is a germ of  $C^{\infty}$-function such that $0\in\RRR^2$ is an isolated critical point of $f$,
then there is a germ  of a homeomorphism $h:(\mathbb{C}, 0)\to (\mathbb{C}, 0)$ such that
$$
{f}_x\circ h(z) = \begin{cases}
\pm|z|^2, & \text{if $0$ is a local extremum, \cite{Danser}},\\
\mathrm{Re}(z^n), & \text{for some $n\in\mathbb{N}$ otherwise, \cite{Prishlyak}}.
\end{cases}
$$
If $0$ is not a local extreme, then the number $n$ does not depend  of a particular  choice of $h.$ In this case the point $0$ will be called a {\it generalized $n$-saddle}, or simply an {\it $n$-saddle}.
The number $n$ corresponds to the number of linear factors  in  (\ref{eq:homo}).
Examples of level sets foliations near isolated  critical points are given in Fig. \ref{fig:ctir-points}.
\begin{figure}[h]
    \center{\includegraphics[width=0.6\linewidth]{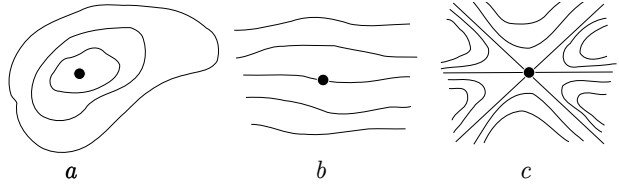}}

    \bigskip

    \caption{Level set foliations in neighborhoods of isolated points
      ((a)  local extreme, (b)  1-saddle, (c)  $3$-saddle)}
    \label{fig:ctir-points}
\end{figure}

Let
$\mathrm{Morse}_{\partial}(M)$ be the space of Morse functions on $M$, which satisfy condition (B),  $f\in\mathrm{Morse}_{\partial}(M),$ and  $x$ be a critical point of $f.$
Then, by Morse Lemma, there exists a coordinate system $(t,s)$ near $x$ such that the function $f$ has one of the following forms $f(s,t) = \pm s^2\pm t^2,$ which are, obviously, homogeneous polynomials without multiple factors.
This implies that $\mathrm{Morse}_{\partial}(M)$ is a subspace of $\FFF(M).$

Let $f\in\FFF(M)$ be a smooth function and $c\in\RRR$ be a real number. A connected component $C$ of the level set $f^{-1}(c)$ is called {\it critical} if it contains at least one critical point, otherwise, $C$ is called {\it regular}. Let $\Delta$ be a foliation of $M$ into connected components of level sets of $f.$ It is well-known that the quotient-space $M/\Delta$ has a structure of $1$-dimensional CW complex. The space $M/\Delta$ is called the Kronrod-Reeb graph, or simply, KR-graph of $f$. We will denote it by $\Gamma_f$. Let $p_f:M\to \Gamma_f$ be a projection of $M$ onto $\Gamma_f$. Then
vertices of $\Gamma_f$ correspond to connected  components of critical level sets of the function $f.$

It should be noted that the function $f\in\FFF(M)$ can be represented as the composition
$$
f = \phi\circ p_f:\;M\xrightarrow{~~p_f~~}\Gamma_f\xrightarrow{~~\phi'~~}\RRR,
$$
where $\phi'$ is the  map induced by $f$.
Let $h\in\mathcal{S}(f)$. Then $f\circ h = f$, and we have $h(f^{-1}(t)) = f^{-1}(t)$ for all $t\in\RRR.$ Hence $h$ interchanges connected components of level sets of the function $f$ and therefore  it induces an automorphism $\rho(h)$ of KR-graph $\Gamma_f$ such that the following diagram is commutative:

\noindent   
In other words, we have a homomorphism
$\rho:\mathcal{S}(f)\to \mathrm{Aut}(\Gamma_f).$ Let  $G = \rho(\mathcal{S}(f))$ be the image of  $\mathcal{S}(f)$ in $\mathrm{Aut}(\Gamma_f).$ It is easy to show that the group $G$ is finite.

Let $v$ be a vertex of $\Gamma_f$ and
$G_v = \{g\in G\;|\;g(v) = v\}$  be the stabilizer of $v$ under the action of  $G$ on $\Gamma_f.$
An arbitrary connected closed $G_v$-invariant neighborhood of $v$ in $\Gamma_f$ containing no other vertices of $\Gamma_f$  will be  called a {\it star} of $v$. We  denote it by $\mathrm{st}(v).$

The set $G_v^{loc} = \{g|_{\mathrm{st}(v)}\;|\;g\in G\}$ which consists of restrictions of elements of $G_v$ onto the star $\mathrm{st}(v)$
is a subgroup of $\mathrm{Aut}(\mathrm{st}(v))$. This group will be called a {\it local stabilizer} of $v.$ Let also $r:G_v\to G_v^{loc}$ be the map defined by $r(g) = g|_{\mathrm{st}(v)}$ for $g\in G_v$, i.e., $r$ is the restriction map.

Let $v$ be a vertex of $\Gamma_f$, and $V = p_f^{-1}(v)$ be the corresponding connected component of the critical level set $f^{-1}(p_f(v)).$

\begin{definition}
    A vertex $v$ of the graph $\Gamma_f$ will be called {\bf special} if there is a bijection between connected components of
    $\mathrm{st}(v)\setminus v$ and $M\setminus V$.
    The corresponding connected component $V = p_f^{-1}(v)$  will be called {\bf special}.
\end{definition}

It follows from  definition of KR-graph $\Gamma_f$ that for a special vertex $v$ there is a 1--1 correspondence between connected components of complement to $v$ in $\mathrm{st}(v)$ and connected components of $\Gamma_f\setminus v.$

Note that a special component $V$ gives a partition $\Xi$ of the surface $M$ whose $0$-dimensional elements are vertices of $V,$ $1$-dimensional elements are edges of $V$, and $2$-dimensional elements are connected components of complement of $V$ in $M.$ Since $M$ is compact, it follows that $\Xi$ has a  finite number of elements in each dimension.
\section{Main result}
Let $f\in\FFF(M)$. Suppose that its Kronrod-Reeb graph $\Gamma_f$ contains a special vertex $v$, and $V$ be the special component of level set of $f$ which corresponds to $v.$

Let $\mathcal{S}_V(f) = \{h\in \mathcal{S}(f)\;|\;h(V) = V\}$ be a subgroup of $\mathcal{S}(f)$ leaving $V$ invariant.  It is easy to see that $\rho(\mathcal{S}_V(f))\subset G_v.$ We denote by $\phi$ the  map
$$
\phi = r\circ \rho: \mathcal{S}_V(f)\xrightarrow{~~\rho~~} G_v\xrightarrow{~~r~~} G_v^{loc}.
$$

Let $H$ be a subgroup of $G_v^{loc}$ and $\mathcal{H}= \phi^{-1}(H)$ be a subgroup of $\mathcal{S}_V(f).$ We will say that the group
$\mathcal{H}$ has property {\rm (C)}  if the following conditions hold.
\begin{itemize}
    \item[(C)] Let $h\in\mathcal{H},$ and $E$  be a $2$-dimensional element of $\Xi$. Suppose that $h(E) = E.$ Then $h(e) = e$ for all other $e\in\Xi$, and the map $h$ preserves orientation of each element of $\Xi.$

\end{itemize}
\begin{lemma}\label{lm:act}
    If $\mathcal{H} = \phi^{-1}(H)$ has property {\rm (C)}, then $H$ acts on the set of all elements of the partition $\Xi.$ Moreover this action is free on the set of $2$-dimensional elements of $\Xi.$
\end{lemma}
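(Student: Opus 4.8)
The plan is to read off both assertions from property~(C), using the specialness of $v$ only to pass between the combinatorics of the star $\mathrm{st}(v)$ and the partition $\Xi$. Two standard facts are used throughout. First, the map $\phi':\Gamma_f\to\RRR$ induced by $f$ satisfies $\phi'\circ\rho(g)=\phi'$ for every $g\in G$, and $\phi'$ is strictly monotone, hence injective, along each edge of $\Gamma_f$. Second, since $p_f$ has connected fibres it induces a bijection $\pi_0(M\setminus V)\to\pi_0(\Gamma_f\setminus v)$; together with the specialness of $v$ this identifies $\pi_0(\mathrm{st}(v)\setminus v)$, $\pi_0(M\setminus V)$ and $\pi_0(\Gamma_f\setminus v)$, so that each component of $\Gamma_f\setminus v$ contains exactly one branch of $\mathrm{st}(v)$ at $v$ and corresponds to exactly one $2$-dimensional element of $\Xi$.

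Every $h\in\mathcal H\subset\mathcal S_V(f)$ satisfies $h(V)=V$, so $h$ permutes the critical points on $V$, the edges of $V$, and the components of $M\setminus V$; hence $\mathcal H$ already acts on the set of all elements of $\Xi$. To descend this to $H$ I would set $\eta\cdot e:=h(e)$ for $e\in\Xi$ and any $h\in\mathcal H$ with $\phi(h)=\eta$; such $h$ exists because $\phi$ maps $\mathcal S_V(f)$ onto $G_v^{loc}$ (if $\rho(h)(v)=v$ then $h$ fixes the component $V=p_f^{-1}(v)$, and $r$ is onto by definition), so $\phi(\mathcal H)=H$. The only nontrivial point is independence of the lift, i.e. that $\ker(\phi|_{\mathcal H})$ acts trivially on $\Xi$: if $\phi(h)=\id$ then $\rho(h)|_{\mathrm{st}(v)}=\id$, so $\rho(h)$ fixes $v$ and each component of $\mathrm{st}(v)\setminus v$, hence each component of $\Gamma_f\setminus v$; by the identification above $h(E)=E$ for every $2$-dimensional $E\in\Xi$, and then property~(C) forces $h(e)=e$ for all $e\in\Xi$. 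Since $\phi$ is a homomorphism, $\eta\cdot e$ is then a well-defined action of $H$ on $\Xi$.

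For the ``moreover'' part, suppose $\eta\in H$ fixes a $2$-dimensional $E\in\Xi$ and pick $h\in\mathcal H$ with $\phi(h)=\eta$, so $h(E)=E$. Property~(C) yields $h(e)=e$ for all $e\in\Xi$; in particular $h$ fixes every $2$-dimensional element, so $\rho(h)$ fixes every component of $\Gamma_f\setminus v$, and therefore --- this is where specialness is essential, each such component containing a unique branch of $\mathrm{st}(v)$ at $v$ --- $\rho(h)$ fixes $v$ and every germ of an edge of $\Gamma_f$ at $v$. As $\rho(h)$ commutes with $\phi'$ and $\phi'$ is injective on each edge, $\rho(h)$ is the identity on each such germ, so $\rho(h)|_{\mathrm{st}(v)}=\id$ and $\eta=\id$. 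Hence every $2$-dimensional element has trivial stabilizer in $H$, which is the asserted freeness.

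The routine parts are that $\mathcal H$ permutes $\Xi$ and that the formula defines a group action. The one delicate step, which I expect to carry the weight of the proof, is the implication ``$h$ fixes all $2$-dimensional elements of $\Xi$ $\Longrightarrow$ $\rho(h)|_{\mathrm{st}(v)}=\id$'': it uses specialness to exclude an edge-transposition at $v$ running over a cycle of $\Gamma_f$, together with the rigidity of edges coming from the monotone function $\phi'$. Everything else is a formal consequence of property~(C).
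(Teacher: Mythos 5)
Your proof is correct and follows essentially the same route as the paper: define the $H$-action on $\Xi$ via lifts through $\phi$, use property (C) to show the kernel of $\phi|_{\mathcal{H}}$ acts trivially (well-definedness) and to deduce freeness on the $2$-dimensional elements. The only difference is that you spell out what the paper leaves implicit --- surjectivity of $\phi$ onto $G_v^{loc}$ and the specialness-based identification of $\pi_0(\mathrm{st}(v)\setminus v)$, $\pi_0(\Gamma_f\setminus v)$ and the $2$-dimensional elements of $\Xi$, which underlies the paper's steps ``by definition of the unit $1_H$'' and ``hence $h=\id$''.
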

\begin{proof}
    Let $g\in H$, and $h \in\mathcal{H}$ be a diffeomorphism such that $\phi(h) = g.$ Define the map
    $
    \tau:H\times\Xi\to \Xi
    $
    by the following rule
    $$
    \tau(g,e) = h(e),\quad e\in\Xi.
    $$

    We claim that this definition does not depend of a particular choice of such $h.$
    Let $h_1,h_2\in\mathcal{H}$ be diffeomorphisms such that $\phi(h_1) = \phi(h_2).$ Then $\phi(h_1\circ h_2^{-1}) = 1_H,$ where $1_H$ be the unit of $H.$ By definition of the unit $1_H$, we have
    $(h_1\circ h^{-1}_2)(E) = E$ for each $2$-dimensional component $E$ of $\Xi.$ Then, by condition (C), $(h_1\circ h_2^{-1})(e) = e$ for other $e\in \Xi.$ Hence $h_1(e) = h_2(e).$ So, the map $\tau$ is well-defined.
    It is easy to see that $\tau(1_H,e) = e,$ where $1_H$ is the unit of $H,$ and $\tau(g_1,\tau(g_2,e)) = \tau(g_1\circ g_2,e)$ for each $g_1,g_2\in H,$ and $e\in\Xi.$
    Thus $\tau$ is an $H$-action on $\Xi.$

    Suppose  $h\in \mathcal{H}$ is such that $h(E) = E$ for some $2$-dimensional component $E$ of $\Xi.$ Then, by condition (C),  $h(E') = E'$ for each $2$-dimensional component $E'$ of $\Xi.$ Hence, $h = \id,$ so the ${H}$-action on the set  of $2$-dimensional components of $\Xi$ is free.
\end{proof}

Thus condition (C) implies that $H$ $<<$combinatorially$>>$ acts of $M$
i.e., it ensures  invariance of the partition $\Xi$ under the action of $H$ on $M.$ Our aim is to prove that in fact this <<combinatorial>> action is induced by a real action of $H$ on $M$ by diffeomorphisms preserving $f$.

Namely the following theorem holds.
\begin{theorem}\label{th:main}
    Suppose $f\in\FFF(M)$ is  such that its KR-graph $\Gamma_f$ contains a special vertex $v$, and $G_v^{loc}$ be the local stabilizer of $v.$ Let also $H$ be a subgroup of $G_v^{loc},$ and $\mathcal{H} = \phi^{-1}(H)$ be a subgroup of $\mathcal{S}_V(f)$ satisfying condition $(\mathrm{C}).$ Then there exists a section $s:H\to\mathcal{H}$ of the map $\phi,$ i.e., the map $s$ is a homomorphism satisfying the  condition $\phi\circ s = \id_{H}.$
\end{theorem}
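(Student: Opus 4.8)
The plan is to cut $M$ into a closed neighborhood of $V$ and the complementary pieces and to build each $s(g)$ from compatible parts. Fix a $G_v$-invariant star $\mathrm{st}(v)$ and set $N:=p_f^{-1}(\mathrm{st}(v))$, a closed union of level-set components containing $V$. Since $v$ is special, $N$ meets each component $E$ of $M\setminus V$ along a single circle $c_E\subset\partial N$, and $M=N\cup\bigcup_E W_E$ where $W_E:=\overline{E\setminus N}$ is glued to $N$ precisely along $c_E$. The group $H\subset G_v^{loc}$ permutes the set $\mathcal C=\pi_0(M\setminus V)$ of $2$-dimensional elements of $\Xi$ compatibly with its action on $\mathrm{st}(v)$, and by Lemma~\ref{lm:act} this permutation action is \emph{free}. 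The section $s$ will be assembled from (i) a model homomorphism on $N$, (ii) $f$-preserving diffeomorphisms carrying each $W_E$ onto the others of its $H$-orbit, and (iii) a gluing.

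\emph{Step 1 (the model on $N$).} I would construct a homomorphism $\sigma\colon H\to\mathrm{Diff}(N)$ by $f$-preserving diffeomorphisms with $\sigma(g)(V)=V$ and $\sigma(g)(c_E)=c_{gE}$, built from the bottom up: realize the $H$-action on the cells of $V$ by a simplicial $H$-action on the graph $V$ respecting, at each $n$-saddle, the cyclic order of the $2n$ local sectors; thicken this over gradient collars of the edges and over standard models $\mathrm{Re}(z^{n})$ of neighborhoods of the $n$-saddles, on which the group of $f$-preserving \emph{orientation-preserving} symmetries is cyclic of order $n$ and so receives a homomorphism from the corresponding vertex stabilizer (cyclic, by the orientation clause of condition (C)); then identify the thickening with $N$. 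The point of condition (C) here is that it forces the realizations to be orientation preserving, so the local homomorphisms assemble into an honest homomorphism and not merely a set section. I expect this to be the main obstacle.

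\emph{Step 2 (the branches).} Choose one representative $E_1,\dots,E_r$ in each $H$-orbit of $\mathcal C$; by freeness, for every $E$ there is a unique $g_E\in H$ with $g_EE_{j(E)}=E$, and we take $g_{E_j}=\id$. Fix lifts $\widetilde g_E\in\mathcal H$ of $g_E$ (with $\widetilde g_{E_j}=\id$); each restricts to an $f$-preserving diffeomorphism $W_{E_{j(E)}}\to W_E$ sending $c_{E_{j(E)}}$ to $c_E$. Then $\mu_E:=(\widetilde g_E|_N)^{-1}\circ\sigma(g_E)$ is an $f$-preserving self-diffeomorphism of $N$ fixing $c_{E_{j(E)}}$; by choosing $\sigma$ in Step 1 compatibly with the local orientations near the $c_E$ transported from the representatives by the $\widetilde g_E$, and using that elements of $\ker(\phi|_{\mathcal H})$ preserve orientations of all elements of $\Xi$ (condition (C)), one arranges that $\mu_E$ restricts on $c_{E_{j(E)}}$ to an orientation-preserving diffeomorphism, hence to one isotopic to the identity in $\mathrm{Diff}^{+}(S^{1})$. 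Pushing such an isotopy through a gradient collar of $c_{E_{j(E)}}$ inside $W_{E_{j(E)}}$ extends $\mu_E|_{c_{E_{j(E)}}}$ to an $f$-preserving diffeomorphism $\beta_E$ of $W_{E_{j(E)}}$, and $\psi_E:=\widetilde g_E\circ\beta_E\colon W_{E_{j(E)}}\to W_E$ is $f$-preserving with $\psi_{E_j}=\id$ and $\psi_E|_{c_{E_{j(E)}}}=\sigma(g_E)|_{c_{E_{j(E)}}}$.

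\emph{Step 3 (assembly).} For $g\in H$ define $s(g)$ to be $\sigma(g)$ on $N$ and $\psi_{gE}\circ\psi_E^{-1}\colon W_E\to W_{gE}$ on each $W_E$ (here $j(gE)=j(E)$). The two formulas agree on $c_E$ since $\psi_{gE}\psi_E^{-1}|_{c_{gE}}=\sigma(g_{gE})\sigma(g_E)^{-1}|_{c_{gE}}=\sigma(g_{gE}g_E^{-1})|_{c_{gE}}$ and $g_{gE}g_E^{-1}=g$ by freeness. Hence $s(g)$ is a well-defined diffeomorphism of $M$ preserving $f$ and $V$, so $s(g)\in\mathcal S_V(f)$; as $s(g)$ induces the automorphism $g$ of $\mathrm{st}(v)$, we get $\phi(s(g))=g$, so $s(g)\in\mathcal H$ and $\phi\circ s=\id_H$. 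That $s$ is a homomorphism is then routine: on $N$ it is $\sigma$, and on $W_E$ it is $(\psi_{g_1g_2E}\psi_{g_2E}^{-1})(\psi_{g_2E}\psi_E^{-1})=\psi_{g_1g_2E}\psi_E^{-1}$. Besides Step 1, the only delicate point is the orientation bookkeeping in Step 2: without the orientation clause of condition (C), $\mu_E|_{c_{E_{j(E)}}}$ could reverse orientation and then need not extend to an $f$-preserving diffeomorphism of $W_{E_{j(E)}}$.
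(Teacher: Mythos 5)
Your overall architecture (a model action on a neighborhood of $V$, lifts of orbit representatives of the $2$-dimensional pieces corrected on collars so as to match the model on the boundary, then a piecewise assembly whose coherence follows from freeness of the $H$-action on $\pi_0(M\setminus V)$) is exactly the paper's plan, and your Steps 2--3 are essentially the paper's Step 3: your ``push an isotopy of an orientation-preserving circle diffeomorphism through a gradient collar'' is the same device as the paper's shift functions $\mathbf{F}_{\alpha}$ along the flow whose orbits are level curves, damped by a cutoff constant along orbits (Lemmas \ref{lm:ch2} and \ref{lm:ext}). The problem is that your Step 1, which you yourself flag as ``the main obstacle,'' is precisely the technical heart of the theorem, and it is not carried out. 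Producing a homomorphism $\sigma\colon H\to\mathrm{Diff}(N)$ by $f$-preserving diffeomorphisms requires making all local choices multiplicative at once, and this is where the paper does real work: it linearizes each $h\in\mathcal{H}$ at a vertex (Lemma \ref{lm:tang}), uses the classification of linear symmetries of the local homogeneous model (Lemma \ref{lm:ho}) to replace the tangent map $T_0\gamma_h$ by a canonical finite-order element $A_h$ with $A_{h_2\circ h_1}=A_{h_2}\circ A_{h_1}$, fixes edge parametrizations $\ell_{ri}$ once and for all to transport along gradient trajectories, and uses condition (C) together with Lemma \ref{lm:part} to show the result depends only on $\phi(h)$ and is injective on $H$. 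Your sketch (``realize the combinatorial action simplicially, thicken over collars and over the standard models $\mathrm{Re}(z^n)$'') presupposes exactly this; in particular your assertion that the group of $f$-preserving orientation-preserving symmetries of the local model is cyclic of order $n$ is false as stated in the Morse case, where the linear symmetry group is the infinite family $\pm\mathrm{diag}(a,1/a)$ and the paper's fix is to project it onto $\{\pm\mathrm{id}\}$; and the fact that a combinatorial rotation of the sectors at a vertex is realized by an honest $f$-preserving germ is not automatic but comes from Lemmas \ref{lm:ho} and \ref{lm:tang}.

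A second, smaller defect: in Step 3 you only arrange that the two defining formulas for $s(g)$ agree on the circles $c_E$, i.e.\ on a set of measure zero. Agreement on the bare circle gives a homeomorphism, not a priori a diffeomorphism; you must arrange agreement on a two-sided band of positive width (the paper makes $\lambda_3(h)$ coincide with $\lambda_2(h)$ on the collars $Y_r$, by taking the cutoff $\delta_r\equiv 1$ there), or otherwise control the transverse jets of $\beta_E$ at $c_{E_{j(E)}}$. This is fixable with the collar you already introduced, but as written the smoothness of $s(g)$ across $\partial N$ is unproved. In summary: the gluing and bookkeeping in Steps 2--3 are sound and parallel to the paper, but the construction of the equivariant model on the atom --- the actual content of the theorem --- is missing.
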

Group actions which have the property of invariance of some partition of the surfaces are studied by Bolsinov and Fomenko \cite{BolsinovFomenko:1998}, Brailov \cite{Brailov:1998}, Brailov and Kudryavtseva \cite{BrailovKudryavtseva:VMNU:1999}, Kudryavtseva \cite{Kudryavtseva:MatSb:1999},
Maksymenko \cite{Maksymenko:def:2009}, Kudryavtseva and Fomenko \cite{KudryavtsevaFomenko:DANRAN:2012, KudryavtsevaFomenko:VMU:2013}.

\subsection{Structure of the paper} In Section \ref{sec:Au} we recall the definitions and statements that will be used in the text. The topological structure of the atom $\mathcal{E}_{V,\varepsilon}$ which corresponds to $V$ is described in Section \ref{sec:top}.
In section \ref{sec:proof}, we construct an $H$-action on the surface~$M.$

\section{Symmetries of homogeneous polynomials}\label{sec:Au}
Let $f_z:\RRR^2\to \RRR$ be a homogeneous polynomial without multiple linear factors. Suppose the origin $0\in\RRR^2$ is not a local extreme for $f_z.$ Let also
$\mathcal{L}(f_z)$ be a group of orientation preserving linear automorphisms $h:\RRR^2\to \RRR^2$ such that $f_z\circ h = f_z$.
The following lemma holds:
\begin{lemma}{\rm(}\cite{Maksymenko:connected-components:2009}, {\rm{Section 6}}{\rm).}\label{lm:ho}
    After some linear change of coordinates one can assume that
    \begin{enumerate}
        \item   if $\deg f_z = 2$, then the group $\mathcal{L}(f_z)$ consists of the linear transformations of the following form
        $$
        \pm\begin{pmatrix}
        a& 0 \\
        0 &  \frac{1}{a}
        \end{pmatrix},\quad a>0,
        $$
        see  \cite[Section 6, case (B)]{Maksymenko:connected-components:2009};
        \item if $\deg\geq 3,$ then the group of $\mathcal{L}(f_z)$ is a finite cyclic subgroup of $\mathrm{SO}(2),$
        \cite[Section~6, case (E)]{Maksymenko:connected-components:2009}.
    \end{enumerate}
\end{lemma}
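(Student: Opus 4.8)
The plan is to regard $f_z$ as a binary form and to analyse $\mathcal{L}(f_z)$ through its induced action on the set of roots of $f_z$ in $\mathbb{CP}^1$, exploiting the rigidity of projective (Möbius) transformations. First I would factor $f_z$ over $\mathbb{C}$ into $d=\deg f_z$ linear factors. Condition (P) guarantees that these factors are pairwise non-proportional: the distinct real forms $L_i$ give distinct real roots, the irreducible quadratics $Q_j$ give distinct pairs of non-real conjugate roots, and a real root can never coincide with a non-real one. Hence $f_z$ has exactly $d$ distinct roots $R\subset\mathbb{CP}^1$. Every $h\in\mathcal{L}(f_z)$ induces a Möbius transformation $\bar h$ of $\mathbb{CP}^1$, and the identity $f_z\circ h=f_z$ forces $\bar h$ to permute $R$. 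This gives a homomorphism $\chi\colon\mathcal{L}(f_z)\to\mathrm{Sym}(R)\cong S_d$, $\chi(h)=\bar h|_R$.

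Suppose first that $\deg f_z\geq 3$, so $|R|\geq 3$. If $h\in\ker\chi$, then $\bar h$ fixes at least three distinct points of $\mathbb{CP}^1$; since a Möbius transformation fixing three points is the identity, $\bar h=\mathrm{id}$ and $h=\lambda I$ is scalar. Then $f_z(\lambda x)=\lambda^{d}f_z(x)=f_z(x)$ gives $\lambda^{d}=1$, whence $\lambda\in\{\pm1\}$; thus $\ker\chi\subseteq\{\pm I\}$ is finite, and as $\mathrm{im}\,\chi\subseteq S_d$ is finite, the whole group $\mathcal{L}(f_z)$ is finite. Finally, averaging any inner product over this finite group produces an invariant inner product; in orthonormal coordinates for it one has $\mathcal{L}(f_z)\subseteq\mathrm{O}(2)$, and since its elements are orientation preserving, $\mathcal{L}(f_z)\subseteq\mathrm{SO}(2)$. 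A finite subgroup of $\mathrm{SO}(2)$ is cyclic, and this orthonormalising change of coordinates is exactly the one asserted in (2).

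For $\deg f_z=2$ the hypothesis that $0$ is not a local extreme means $f_z$ is an indefinite nondegenerate quadratic form, hence it has two distinct real roots. Sending them to $[1:0]$ and $[0:1]$ and rescaling brings $f_z$ to the normal form $f_z(x,y)=xy$. Writing $h=\begin{pmatrix}p&q\\ r&s\end{pmatrix}$, the equation $f_z\circ h=f_z$ becomes $pr=qs=0$ together with $ps+qr=1$; the anti-diagonal solutions ($p=s=0$) satisfy $\det h=-qr=-1<0$ and are excluded by orientation preservation, leaving $q=r=0$ and $ps=1$, that is $h=\pm\begin{pmatrix}a&0\\ 0&\frac{1}{a}\end{pmatrix}$ with $a>0$, as in (1). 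Conceptually this is the degenerate counterpart of the previous paragraph: with only two roots available the Möbius maps fixing them form a one-parameter (hyperbolic) family, which is precisely this group.

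The conceptual core is the passage to $\mathbb{CP}^1$ and the rigidity statement that a Möbius transformation is determined by three points; the dichotomy in the lemma reflects exactly whether three roots are available ($\deg\geq 3$) or only two ($\deg=2$). Everything else is bookkeeping, and the points requiring care are the distinctness of the $d$ roots over $\mathbb{C}$ extracted from condition (P), the control of the scalar ambiguity through $\lambda^{d}=1$, and the standard reduction of a finite orientation-preserving linear group to a cyclic subgroup of $\mathrm{SO}(2)$ by an averaged inner product. I do not expect a genuine obstacle beyond organising these three ingredients.
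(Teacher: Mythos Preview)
The paper does not prove this lemma at all; it is simply quoted from \cite{Maksymenko:connected-components:2009} with pointers to the relevant cases there, so there is no ``paper's own proof'' to compare against. Your argument is therefore an independent proof, and its overall strategy---passing to the root set in $\mathbb{CP}^1$, using Möbius rigidity to get finiteness for $\deg\ge 3$, and then averaging an inner product to land in $\mathrm{SO}(2)$---is correct and standard.

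One small inaccuracy: you assert that $f_z$ has \emph{exactly} $d$ distinct roots in $\mathbb{CP}^1$, but condition (P) only forbids repeated \emph{linear} factors; the irreducible quadratics may appear with exponents $q_j>1$ (see the display for $f_x$ in the paper), so the non-real roots can have multiplicity and the number of distinct roots may be strictly less than $d$. This does not damage your argument, since all you actually use is that there are at least three distinct roots when $\deg f_z\ge 3$. That still holds under the standing hypothesis that $0$ is not a local extreme: if there are no real linear factors then $f_z$ is a product of definite quadratics, and having a single such factor (even to a power) would make $0$ an extreme, so at least two distinct $Q_j$'s---hence at least four distinct roots---must occur; if there is at least one real linear factor, then together with either another linear factor or a quadratic factor (needed to reach degree $\ge 3$) you again get at least three distinct roots. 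It would be worth tightening your write-up to say ``at least three distinct roots'' rather than ``exactly $d$''. The $\deg f_z=2$ case is handled correctly.
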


We will also need the following lemma:
\begin{lemma}{\rm(}\cite{Maksymenko:connected-components:2009}, {\rm{Corollary 7.4).}}\label{lm:tang}
    Let  $h:(\RRR^2,0)\to (\RRR^2,0)$ be a germ of a diffeomorphism $h:\RRR^2\to \RRR^2$ at $0\in\RRR^2,$ and $T_0h$ be its tangent map at $0\in \RRR^2.$ If $f_z\circ h = f_z$, then $f_z\circ T_0h = f_z.$
\end{lemma}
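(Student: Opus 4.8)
The plan is to exploit the homogeneity of $f_z$ together with the first-order Taylor expansion of $h$ and a single rescaling limit. Write $d = \deg f_z$ and $A = T_0 h$, and expand
$$
h(y) = Ay + R(y), \qquad R(y) = o(|y|) \text{ as } y\to 0.
$$
This is nothing more than differentiability of $h$ at the origin combined with $h(0) = 0$; the matrix $A$ is invertible since $h$ is a germ of a diffeomorphism, but invertibility will not even be needed for the conclusion.

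First I would fix an arbitrary point $x\in\RRR^2$ and restrict attention to real $t>0$ small enough that $tx$ lies in a neighborhood of $0$ on which the germ identity $f_z\circ h = f_z$ holds. Using the homogeneity of $f_z$ twice, once in the form $f_z(tx) = t^d f_z(x)$ and once in the form $f_z(w) = t^d f_z(w/t)$, I would obtain the chain
\[
f_z(x) = \frac{f_z(tx)}{t^d} = \frac{f_z(h(tx))}{t^d} = f_z\!\left(\frac{h(tx)}{t}\right) = f_z\!\left(Ax + \frac{R(tx)}{t}\right).
\]
Since $R$ is $o(|y|)$ near $0$, the remainder $R(tx)/t$ tends to $0$ as $t\to 0^{+}$, so passing to the limit and using continuity of the polynomial $f_z$ gives $f_z(x) = f_z(Ax)$. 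As $x$ was arbitrary, this is precisely the desired identity $f_z\circ T_0 h = f_z$.

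The step requiring the most care is the passage to the limit: I would check explicitly that $R(tx)/t \to 0$, which uses only differentiability of $h$ at the origin rather than full smoothness. I would also emphasize the conceptual point that, although $f_z\circ h = f_z$ is only a germ identity valid near $0$, homogeneity converts it into information at arbitrarily small scales, and the limit recovers a genuine global polynomial identity holding for every $x\in\RRR^2$; no separate extension argument is needed. Finally, it is worth noting that the hypothesis that $f_z$ has no multiple linear factors plays no role in this step—the argument rests solely on the homogeneity of $f_z$ and the definition of the tangent map—so this part of the reasoning applies to all homogeneous $f_z$.
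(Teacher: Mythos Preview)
Your argument is correct and is essentially identical to the paper's own proof: the paper writes the same chain
\[
f_z(x) = \frac{f_z(tx)}{t^k} = \frac{f_z(h(tx))}{t^k} = f_z\!\left(\frac{h(tx)}{t}\right) \xrightarrow[t\to 0]{} (f_z\circ T_0h)(x),
\]
and your version merely unpacks the limit step by inserting the Taylor expansion $h(tx)=A(tx)+R(tx)$ explicitly. Your additional remarks (that only differentiability at $0$ is used, and that the ``no multiple linear factors'' hypothesis is irrelevant here) are accurate observations but do not change the route of the proof.
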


\begin{proof}
    For the sake of completeness we will recall a short
    proof from \cite{Maksymenko:connected-components:2009}.

    Assume that the polynomial $f_z$ is a homogeneous function of degree $k,$ i.e., $f_z(tx) = t^kf_z(x)$ for $t\geq 0$ and $x\in \RRR^2.$
    Then
    $$
    f_z(x) = \frac{f_z(tx)}{t^k} = \frac{f_z(h(tx))}{t^k} = f_z\left( \frac{h(tx)}{t} \right) \xrightarrow[~~t\to 0~~]{} (f_z\circ T_0h)(x).
    $$
    Lemma \ref{lm:tang} is proved.
\end{proof}

\section{Topological structure of the atom $\mathcal{E}_{V,a}$}\label{sec:top}
Let $f$ be a smooth function from $\FFF(M)$, $\varepsilon_1 >0,$ $c\in \RRR,$ and $V$ be a connected component of some critical level $f^{-1}(c)$  of $f$.

Let also $\mathcal{E}$ be a connected component of $f^{-1}([c-\varepsilon_1,c+\varepsilon_1]),$ which contains $V.$ Assume that the boundary $\partial \mathcal{E}$ consists of $n+k$ connected components $A_i,$ $i =1,2,\ldots, n+k,$ i.e., $\partial\mathcal{E} = \bigcup_{i = 1}^n A_i\cup\bigcup_{j = 1}^kA_{-j}.$ Since $f\in\FFF(M)$, it follows that $f|_{\mathcal{E}}$  belongs to $\FFF(\mathcal{E})$, and so, by (B), $f|_{\mathcal{E}}$ takes a constant value at each connected component of the boundary $\partial \mathcal{E}$. Assume that $f(A_i) = c_i\in[c,c+\varepsilon_1]$, $i\geq 1$, and $f(A_{i}) = d_i\in[c-\varepsilon_1,c],$ $i\leq 1.$ Put $c' = \min\{c_i\}$ and $d' = \max\{d_i\}$. Fix $a>0$  such that
$[c-a,c+a]\subset [d',c'].$

A connected component of $f^{-1}([c-a,c+a]),$ which contains $V$  will be called an {\it atom} of $V$ and denoted by $\mathcal{E}_{V,a}$.

Let $H$ be a subgroup of $G_v^{loc}$ and $\mathcal{H}= \phi^{-1}(H)\subset\mathcal{S}_V(f).$
We will need the following lemma.
\begin{lemma}\label{lm:part}
    Let $\mathcal{E}_{V,a}$ be an atom of a special critical component $V$, $A$ be a connected component  of $\partial \mathcal{E}_{V,a}$, and $h\in\mathcal{H}.$ Assume that the group $\mathcal{H}$ has property  (C). If $h|_{\mathcal{E}_{V,a}}(A) = A,$ then $h$ preserves the orientation of $A$.
\end{lemma}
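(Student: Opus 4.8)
The plan is to pin the boundary circle $A$ to the unique $2$-dimensional element of $\Xi$ that contains the part of the atom adjacent to $A$, use property (C) to conclude that $h$ fixes that element and preserves its orientation, and then read off the orientation of $A$ by a pointwise computation along $A$. First I would note that $h\in\mathcal{H}\subset\mathcal{S}_V(f)$ satisfies $f\circ h=f$ and $h(V)=V$, so $h$ maps $f^{-1}([c-a,c+a])$ onto itself, permutes its connected components, and carries the one containing $V$, namely $\mathcal{E}_{V,a}$, onto itself; thus $h|_{\mathcal{E}_{V,a}}$ is a self-diffeomorphism of the atom and the hypothesis $h|_{\mathcal{E}_{V,a}}(A)=A$ just says $h(A)=A$. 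Let $N_A$ be the connected component of $\mathcal{E}_{V,a}\setminus V$ whose closure contains $A$; since $A$ has a one-sided collar $A\times[0,\delta)$ inside $\mathcal{E}_{V,a}$ with $A\times(0,\delta)$ connected and disjoint from $V$, there is exactly one such component, so $N_A$ is well defined. Because $h$ preserves $\mathcal{E}_{V,a}$ and $V$ it permutes the components of $\mathcal{E}_{V,a}\setminus V$, and from $h(A)=A$ together with this uniqueness we get $h(N_A)=N_A$.

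Next, $N_A$ is connected and disjoint from $V$, hence lies in a single connected component $E_A$ of $M\setminus V$, i.e.\ in a single $2$-dimensional element of $\Xi$; moreover $A\subset E_A$, since $A\subset\overline{N_A}\subset\overline{E_A}$ while $A\cap V=\emptyset$ and $\overline{E_A}\setminus E_A\subset V$. As $h(V)=V$ and $h(N_A)=N_A\subset E_A$, we obtain $h(E_A)=E_A$. By property (C) the map $h$ then preserves the orientation of $E_A$.

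Finally I would carry out a pointwise argument along $A$. For each $x\in A$ the differential $T_xh$ maps $T_xA$ onto $T_{h(x)}A$ (because $h(A)=A$), and it maps any tangent vector at $x$ pointing into $N_A$ to a tangent vector at $h(x)$ pointing into $N_A$ (because $h(N_A)=N_A$ and $N_A$ occupies one side of the two-sided circle $A$). Choose a continuous nowhere-zero tangent field $u$ along $A$ (an orientation of $A$) and a continuous field $w$ along $A$ with each $w(x)$ pointing into $N_A$, and, after possibly replacing $u$ by $-u$, arrange that $(u(x),w(x))$ is a positively oriented basis of $T_xE_A$ for the fixed orientation of $E_A$, for every $x\in A$ (this is consistent by connectedness of $A$). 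In the bases $(u(x),w(x))$ and $(u(h(x)),w(h(x)))$ the matrix of $T_xh$ is upper triangular: its $(1,1)$-entry is the nonzero number $\lambda(x)$ determined by $T_xh(u(x))=\lambda(x)\,u(h(x))$, and its $(2,2)$-entry is some $\mu(x)>0$, positive because $T_xh(w(x))$ points into $N_A$. Since $h$ preserves the orientation of $E_A$, the determinant $\lambda(x)\mu(x)$ is positive, hence $\lambda(x)>0$; therefore $h|_A$ preserves the orientation $u$ of $A$, as claimed.

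The step I expect to cost the most care is the last one: making precise that $h$ preserves the side of $A$ occupied by $N_A$ — this is exactly where the hypotheses that $h$ preserves $f$ and $\mathcal{E}_{V,a}$ enter, via $h(N_A)=N_A$ — so that the sign $\mu(x)>0$ is justified and the triangular-matrix computation is legitimate. Identifying $E_A$ and invoking property (C) is routine once $V$ is known to be special and $\mathcal{H}$ has property (C).
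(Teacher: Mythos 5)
Your proof is correct, but it takes a genuinely different route from the paper's. Both arguments start identically: from $h(A)=A$ one deduces that the $2$-dimensional element $E_A$ of $\Xi$ containing $A$ satisfies $h(E_A)=E_A$, so condition (C) applies. The difference lies in how the orientation information is transported to $A$. You stay inside $E_A$: condition (C) gives that $h$ preserves the orientation of $E_A$, while $f\circ h=f$ and $h(\mathcal{E}_{V,a})=\mathcal{E}_{V,a}$ force $T_xh$ to preserve both the tangent line $T_xA$ and the side of $A$ occupied by the atom, so positivity of the determinant in an adapted frame along $A$ forces $h|_A$ to preserve the orientation of $A$; the side-preservation step, which you rightly single out as the delicate point, is handled correctly via $h(N_A)=N_A$ (equivalently, via $df_{h(x)}\circ T_xh=df_x$). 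The paper instead transports orientations through the critical level: using the gradient flow of $f$ it associates with $A$ an oriented cycle $\gamma_A$ in $V$ made of $1$-dimensional elements of $\Xi$ (the correspondence of \cite{Oshemkov}), and condition (C), which makes $h$ fix every edge of $V$ together with its orientation, then yields preservation of the oriented cycle $\gamma_A$ and hence of the oriented circle $A$. Your version is more self-contained (no auxiliary Riemannian metric and no $A\leftrightarrow\gamma_A$ correspondence), but it leans on the clause of (C) concerning orientations of $2$-dimensional elements, which is meaningful only when $E_A$ is orientable; the paper's route uses only the orientations of the edges of $V$, which are always defined, and is therefore slightly more robust if $M$ is non-orientable. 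Under the literal reading of condition (C) as stated in the paper, your argument is complete.
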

\begin{proof}
    Fix a Riemannian metric $\langle\cdot,\cdot\rangle$ on $M.$  Let $\nabla f$ be a gradient vector field of the function $f$ in this Riemannian metric. Let also $Q$ be a set of points $x\in A$ such that there exists an integral curve $c_x$ of $\nabla f,$ which joins the point $x$ with some point $y_x\in V.$ Then $Q$ is a union of open intervals in $A,$
    and the map $\psi:Q\to V$, $\psi(x) = y_x$ is an embedding. The image of $\psi(Q)$ is a cycle  in $V.$ So, the connected component $A$ of $\partial \mathcal{E}_{V,a}$ defines the cycle $\gamma_A$ in $V.$ Moreover the orientation of $A$ induces the orientation of $\gamma_A$ and vice versa, see   \cite{Oshemkov}.

    Assume that $\mathcal{H}$ has property (C). Let $h\in \mathcal{H}$ and $E$ be a $2$-dimensional element of $\Xi$ such that $h(E) = E.$ Then by (C), $h(e) = e$ for all other $e\in \Xi.$ In particular $h(\gamma_A) = \gamma_A$ and $h$  preserves orientation  of $\gamma_A$. Then $h(A) = A,$ and $h$ preserves  orientation of $A$.

\end{proof}

\section{Proof of Theorem \ref{th:main}}\label{sec:proof}
Suppose $f\in\FFF(M)$ is such that  its KR-graph contains a special vertex
$v,$ $V = p^{-1}_f(v)$ be the corresponding special  component of some level set, which corresponds to $v$, and $G_v^{loc}$ be the local stabilizer of $v.$

Let $H$ be a subgroup of $G_v^{loc}$ such that $\mathcal{H} = \phi^{-1}(H)$ has property (C).
We will construct a lifting of the $H$-action on $\mathrm{st}(v)$ to the action $\Sigma:{H}\times M\to M$ of the group $H$ on the surface $M.$

By Lemma \ref{lm:act} there is an action $\sigma^0:H\times \mathsf{V}\to \mathsf{V}$ of $H$ on the set of vertices of $V$ defined by the rule:
$$
\sigma^0(g,z) = h(z),
$$
where $h\in\mathcal{H}$ is any diffeomorphism such that $\phi(h) = g.$

\subsection*{Step 1}\label{sub:step1} Now we will extend the action $\sigma^0$ to the $H$-action $\sigma^1$ on the set  of neighborhoods of vertices of $V.$ Assume that the action $\sigma^0$ has $s$ orbits $\mathsf{V}_r = \{z_{r0},z_{r1},\ldots, z_{r\,k(r)}\}$ for some $k(r)\in\mathbb{N},$ $r = 1,2,\ldots, s,$ and let $\mathsf{V} = \bigcup_{r = 1}^s\mathsf{V}_r$ be the union of vertices of $V.$

Then, by definition of the class $\FFF(M),$ for each $r = 1,2,\ldots, s$ there exists a chart $(U_{r0},q_{r0})$ which contains $z_{r0}$ such that the map $f\circ q_{r0}^{-1} = f_{r}$ is a homogeneous polynomial without multiple linear factors. We can also assume that $q_{r0}(U_{r0})\subset \RRR^2$ is a $2$-disk with the center at $0\in\RRR^2$ and radius $\varepsilon$, and   the group $\mathcal{L}(f_r)$ has the properties described in Lemma \ref{lm:ho}.  Fix any  diffeomorphisms
\begin{equation}\label{eq:h}
h_{ri}\in \mathcal{H}\; \quad\text{such that}\quad  h_{ri}(z_{r0}) = z_{ri},\quad  i = 1,2,\ldots, k(r),
\end{equation}
and define charts $(U_{ri},q_{ri})$ for the points $z_{ri}$, $i = 1,2,\ldots,k(r)$ in the following
way:
\begin{itemize}
    \item $U_{ri} = h_{ri}(U_{r0})$;
    \item the map $q_{ri}$ is defined from the diagram:
    $$
    \xymatrix{
        U_{r0}\ar[d]_{q_{r0}} \ar[r]^{h_{ri}} & U_{ri} \ar[dl]^{q_{ri}}\\
        q_{r0}(U_{r0})
    }
    $$
    i.e., $q_{ri} = q_{r0}\circ h_{ri}^{-1}$.
\end{itemize}
Reducing $\varepsilon$, we can assume that $U_{ri}\cap U_{rj} = \varnothing$ for $i\neq j.$

Thus  the chart $(U_{ri}, q_{ri})$ is chosen so that the map $ f\circ q_{ri}^{-1}:q_{r0}(U_{r0})\to \RRR$ is a homogeneous polynomial without multiple linear factors which coincides with  given polynomial $f_r$ for the chart $(U_{r0},q_{r0}).$ We also put $\mathsf{U}_r = \bigcup_{i = 0}^{k(r)}U_{ri}$, and $\mathsf{U} = \bigcup_{r = 1}^s\mathsf{U}_{r}$.
\begin{lemma}
    There exist a homomorphism $\lambda_1:\mathcal{H}\to\mathrm{Diff}(\mathsf{U})$ and a monomorphism $\chi_1:H\to \mathrm{Diff}(\mathsf{U})$ such that
    the following diagram is commutative:
    $$
    \xymatrix{
        \mathcal{H}\ar[rr]^-{\lambda_1} \ar[d]_{\phi} && \mathrm{Diff}(\mathsf{U})\\
        H \ar[urr]_-{\chi_1}&
    }
    $$

\end{lemma}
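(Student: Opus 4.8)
The plan is to build $\lambda_1$ first and then extract $\chi_1$ from it. On each orbit $\mathsf{V}_r$ we have fixed a ``model'' chart $(U_{r0},q_{r0})$ in which $f$ is the homogeneous polynomial $f_r$, and we have transported it to the charts $(U_{ri},q_{ri})$ via the chosen diffeomorphisms $h_{ri}\in\mathcal{H}$. Given $h\in\mathcal{H}$ with $\phi(h)=g$, the diffeomorphism $h$ permutes the vertices of $V$, hence permutes the open sets $U_{ri}$ according to the $H$-action $\sigma^0$ of Lemma~\ref{lm:act}: if $h(z_{ri})=z_{r'i'}$ then (after possibly shrinking $\varepsilon$) $h(U_{ri})$ and $U_{r'i'}$ are neighborhoods of the same vertex. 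The point is that $h$ need \emph{not} map $U_{ri}$ onto $U_{r'i'}$, so we must \emph{correct} it. First I would read $h$ in the model coordinates: the composite
$$
\widehat{h}_{ri} \;=\; q_{r'i'}\circ h\circ q_{ri}^{-1}\;=\;q_{r0}\circ h_{r'i'}^{-1}\circ h\circ h_{ri}\circ q_{r0}^{-1}
$$
is a germ of diffeomorphism of $(\RRR^2,0)$ satisfying $f_r\circ\widehat{h}_{ri}=f_{r'}$ wherever defined (since all the $h$'s and $h_{ri}$'s preserve $f$ and $q_{ri}$ pulls $f$ back to the same $f_r$, $f_{r'}$ on the nose). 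Here I would distinguish: if $r\neq r'$ the two polynomials $f_r,f_{r'}$ are \emph{a priori} different, but the existence of $\widehat h_{ri}$ conjugating one to the other forces them to have the same degree and, passing to tangent maps, the same number of linear factors; I would therefore arrange from the start (relabelling orbits, composing the model chart on orbit $r'$ with a fixed linear map) that whenever two vertices lie in the same $H$-orbit their model polynomials are literally equal, so that $\widehat h_{ri}\in$ the group of germ-symmetries of a single $f_r$.

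Next comes the key normalization. By Lemma~\ref{lm:tang}, $f_r\circ T_0\widehat h_{ri}=f_r$, so the tangent map $L:=T_0\widehat h_{ri}$ lies in $\mathcal{L}(f_r)$ if it is orientation-preserving; orientation-preservation I would get from property~(C) via Lemma~\ref{lm:part} (and Lemma~\ref{lm:ho}, which pins down $\mathcal{L}(f_r)$). By Lemma~\ref{lm:ho}, $\mathcal{L}(f_r)$ is either the one-parameter group $\pm\mathrm{diag}(a,1/a)$ (the $2$-saddle case — here, after (C), only the $a>0$ component) or a finite cyclic subgroup of $\mathrm{SO}(2)$. In either case $\mathcal{L}(f_r)$ is a \emph{compact or contractible} matrix group (after the (C)-restriction, the $2$-saddle component $\{\mathrm{diag}(a,1/a):a>0\}$ is contractible; the cyclic $\mathrm{SO}(2)$-subgroup is finite hence acts freely), and I would \emph{replace} the correction so that $\lambda_1(h)$ on $U_{ri}$ is declared to be $q_{r'i'}^{-1}\circ L_{ri}(h)\circ q_{ri}$ for a linear $L_{ri}(h)\in\mathcal{L}(f_r)$ chosen functorially in $h$. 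The mechanism for ``functorially'' is: on the model orbit, set $\lambda_1$ on $U_{r0}$ to be, for each $g\in H$, the linear map $q_{r\,\pi(i)}^{-1}\circ L_r(g)\circ q_{r0}$ where $L_r\colon H\to\mathcal{L}(f_r)$ is any homomorphism lifting the (cyclic) action of $H$ on the orbit $\mathsf{V}_r$ through that vertex — such $L_r$ exists because the orbit action is by a cyclic group acting freely on $\{0,\dots,k(r)\}$, and a cyclic subgroup of $\mathrm{SO}(2)$ realizes exactly such a cyclic rotation, while the generator of the $2$-saddle symmetry group can be taken to be the order-two $-\mathrm{id}$ (or $+\mathrm{id}$) as needed. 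Concretely: $\chi_1(g)$ on $U_{ri}$ is $q_{r0}^{-1}\circ (\text{rotation by }2\pi/(\text{orbit length})\cdot(\text{exponent of }g))\circ q_{r0}$ pushed forward to the correct component — a genuine homomorphism because rotations compose additively. Then define $\lambda_1(h):=\chi_1(\phi(h))$; this is a homomorphism $\mathcal{H}\to\mathrm{Diff}(\mathsf{U})$ because $\phi$ is a homomorphism, the diagram commutes by construction ($\lambda_1=\chi_1\circ\phi$), and $\chi_1$ is injective because distinct elements of the cyclic group $H$ (viewed inside $G_v^{loc}$) act by distinct rotations on at least one orbit — indeed $H$ acts freely on the $2$-dimensional elements of $\Xi$ by Lemma~\ref{lm:act}, and these meet the neighborhoods $U_{ri}$ in distinguishable sectors.

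The main obstacle I anticipate is \textbf{not} the existence of the rotation homomorphism (that is forced by Lemma~\ref{lm:ho}) but rather the \emph{compatibility of the model identifications across an orbit}: one must choose the $h_{ri}$ in \eqref{eq:h} and the model polynomials $f_r$ so that the induced ``holonomy'' germ $q_{r0}\circ h_{ri}^{-1}\circ h_{rj}\circ q_{r0}^{-1}$, for $h_{ri},h_{rj}$ realizing two different paths $z_{r0}\to z_{ri}\to z_{r0}$, lies in $\mathcal{L}(f_r)$ rather than merely in the larger group of all germ-symmetries of $f_r$ — i.e. one needs the tangent-map reduction of Lemma~\ref{lm:tang} to be applied \emph{coherently}, replacing each $q_{ri}$ by $T_0(\text{germ})\circ q_{ri}$ so that all transition germs become linear. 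This ``linearization of the atlas'' step, together with checking that shrinking $\varepsilon$ does not break the disjointness $U_{ri}\cap U_{rj}=\varnothing$ and that $\mathsf U$ is still $\mathcal H$-``almost invariant'' enough for $\lambda_1(h)$ to be defined on all of $\mathsf U$, is where the real work lies; once the atlas is linear the homomorphism property of $\lambda_1$ and $\chi_1$ is a formal consequence of composing linear maps in a fixed cyclic subgroup of $\mathrm{SO}(2)$ (or of the contractible $2$-saddle group). I would finish by remarking that $\lambda_1(h)$ restricted to each $U_{ri}$ preserves $f$ (it is built from $\mathcal{L}(f_r)$-elements and $f$-preserving charts), which is what lets Step~2 glue these local actions to a global one.
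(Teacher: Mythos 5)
Your first half follows the paper: you read $h$ in the model charts, note that $\gamma_h=q_{rj}\circ h\circ q_{ri}^{-1}$ preserves the model polynomial (which, by the very definition $q_{ri}=q_{r0}\circ h_{ri}^{-1}$ with $h_{ri}\in\mathcal{H}$, is literally the same $f_r$ across an orbit --- so the ``holonomy/relabelling'' issue you worry about is a non-issue), and you invoke Lemma \ref{lm:tang} and Lemma \ref{lm:ho} to land in $\mathcal{L}(f_r)$. But then you abandon the canonical linearization and instead try to manufacture $\chi_1$ abstractly, as a homomorphism $L_r:H\to\mathcal{L}(f_r)$ ``lifting the cyclic action of $H$ on the orbit'', with $\chi_1(g)|_{U_{ri}}$ a rotation by $2\pi/(\text{orbit length})$ times an exponent of $g$, and finally set $\lambda_1:=\chi_1\circ\phi$. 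This is where the argument breaks. Nothing in the hypotheses makes $H$ cyclic, nor makes its action on a vertex orbit free or regular (Lemma \ref{lm:act} gives freeness only on the $2$-dimensional elements of $\Xi$), so the asserted lift $L_r$ need not exist. Worse, your $\chi_1(g)$ depends only on how $g$ permutes the vertices, so it throws away the local action of vertex stabilizers: if $V$ has a single vertex (e.g.\ a figure-eight type component) and $g\neq 1_H$ permutes the complementary $2$-cells while fixing that vertex, your recipe gives $\chi_1(g)=\mathrm{id}_{\mathsf{U}}$, and $\chi_1$ is not a monomorphism. It also loses the property, needed in Step 2, that $\lambda_1(h)$ sends the germ at a vertex of an edge $R$ to the germ of $h(R)$.

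The paper avoids all of this by defining $A_h$ canonically from $h$ itself: $A_h=T_0\gamma_h$ when $\deg f_r\geq 3$ (a finite-order rotation by Lemma \ref{lm:ho}), and $A_h=\mathrm{sign}(a)\cdot\mathrm{id}$ when $\deg f_r=2$, then setting $\lambda_1(h)|_{U_{ri}}=q_{rj}^{-1}\circ A_h\circ q_{ri}$. Multiplicativity $A_{h_2\circ h_1}=A_{h_2}\circ A_{h_1}$ is just the chain rule (plus multiplicativity of the sign), so $\lambda_1$ is a homomorphism without any choice of section or cyclicity assumption; $\chi_1(g):=\lambda_1(h)$ is then well defined and injective because one shows $\lambda_1(h)=\mathrm{id}_{\mathsf{U}}$ iff $h$ acts trivially on the $2$-cells of $\Xi$, using condition (C) and Lemma \ref{lm:part} (trivial action on edge germs with orientations forces trivial action on the boundary components, hence on the $2$-cells, and conversely). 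To repair your write-up you would essentially have to replace the abstract rotation lift by exactly this tangent-map construction, so as it stands the proposal has a genuine gap rather than an alternative proof.
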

\begin{proof}
    (1) First we construct a map $\lambda_1.$ Let $h\in\mathcal{H}$ be such that $h(z_{ri}) = z_{rj}$ for some $i,j = 0,1,\ldots k(r)$ and $r = 1,2,\ldots, s.$ Let also $\gamma_h = q_{rj}\circ h\circ q_{ri}^{-1}$ be a diffeomorphism of $q_{r0}(U_{r0}).$
    It is easy to see that the map $\gamma_{h}$ preserves the polynomial $f_r.$ By Lemma \ref{lm:tang} the tangent map $T_0\gamma_{h}$ also preserves the polynomial $f_r$, so $T_0\gamma_{h}\in\mathcal{L}(f_r).$ Define a linear map $A\in\mathcal{L}(f_r)$ as follows:
    if $\deg f_r = 2$, then, by Lemma \ref{lm:ho},
    $$
    T_0\gamma_{h} =
    \begin{pmatrix}
    a & 0\\
    0 & \frac{1}{a}
    \end{pmatrix},\quad a\neq0,
    $$
    and we set
    $$
    A_{h} = \mathrm{sign}(a)
    \begin{pmatrix}
    1, & 0\\
    0 & 1
    \end{pmatrix}.
    $$
    If $\deg f_z \geq 3,$ then by assumption and Lemma \ref{lm:ho}, $\mathcal{L}(f_z)$ is a cyclic subgroup of $\mathrm{SO}(2).$ In this case we put
    $$
    A_{h} = T_0\gamma_{h}.
    $$
    We define the diffeomorphism $\lambda_1(h)\in\mathrm{Diff}(\mathsf{U})$ by the rule:
    \begin{equation}\label{eq:def-lambda1}
    \lambda_1(h)|_{U_{ri}} = q_{rj}^{-1}\circ A_h\circ q_{ri}.
    \end{equation}
    (2) Now we prove that the map $\lambda_1$ is a homomorphism. Suppose $h_1, h_2\in\mathcal{H}$ are such that $h(z_{ri}) = z_{rj}$ and $h(z_{rj}) = z_{rk}.$  By (\ref{eq:def-lambda1}), we have
    $$
    \lambda_1(h_1)|_{U_{ri}} = q_{rj}^{-1}\circ A_{h_1}\circ q_{ri},\quad \lambda_1(h_2)|_{U_{rj}} = q_{rk}^{-1}\circ A_{h_2}\circ q_{rj},
    $$
    and
    $$
    \lambda_1(h_2)|_{U_{rj}}\circ \lambda_1(h_1)|_{U_{ri}} = q_{rk}^{-1}\circ A_{h_2}\circ A_{h_1}\circ q_{ri}.
    $$
    On  the other hand, we have
    $$
    \lambda_1(h_2\circ h_1) = q_{rk}^{-1}\circ A_{h_2\circ h_1}\circ q_{ri}.
    $$
    It follows from the  definition of the linear map $A_h$, that $A_{h_2\circ h_1} = A_{h_2}\circ A_{h_1}.$ Hence
    $$
    \lambda_1(h_2\circ h_1) = \lambda_1(h_2)\circ \lambda_1(h_1).
    $$
    So, the map $\lambda_1$ is a homomorphism.

    (3) Let $g\in H$ and $h\in\mathcal{H}$ be such that $\phi(h) = g.$ Then we define the map
    $\chi_1:H\to \mathrm{Diff}(\mathsf{U})$ by the rule
    $$
    \chi_1(g) = \lambda_1(h).
    $$
    Obviously that $\chi_1$ is a homomorphism. It remains to prove that the map $\chi_1$ is a monomorphism. It is sufficient to check that $\mathrm{Ker}\chi_1 = \mathrm{Ker}\psi,$ i.e., $\lambda_1(h) = \id_{\mathsf{U}}$ iff $h$ trivially acts on the set of $2$-dimensional elements of $\Xi.$

    Suppose that $h$ trivially acts on the set of $2$-dimensional elements of $\Xi.$ By condition (C), $h$ trivially acts on set of vertices and edges of $V$.  Since $h(z_{ri}) = z_{ri}$ for all $i = 0,1,\ldots k(r)$ and $r = 1,2,\ldots, s$, it follows from (\ref{eq:def-lambda1}) that $\lambda_1(h) = \id_{\mathsf{U}}.$

    Suppose  $h\in\mathcal{H}$ is such that $\lambda_1(h) = \id_{\mathsf{U}}.$ Then $h(e) = e$ for each edge $e$ of $V$,
    and $h$ preserves the orientation of $e.$ Hence by Lemma \ref{lm:part}, $h$ leaves invariant each connected
    component of $\partial \mathcal{E}_{V,a}$ with its orientation. Therefore $h$ trivially acts on
    the set of   \break   $2$-dimensional elements of $\Xi.$
\end{proof}

Let $\sigma^1:H\times \mathsf{U}\to \mathsf{U}$ be a map defined by the formula
$$
\sigma^1(g,x) = \chi_1(g)(x),
\quad x\in \mathsf{U}.
$$
Since $\chi_1$ is a homomorphism, it follows  that $\sigma^1$ is an $H$-action on $\mathsf{U}.$

\subsection*{Step 2} In this step we extend the action $\sigma^1$ to the  $H$-action $\sigma$ on the atom $\mathcal{E}_{V,a}.$ We start with some preliminaries. Let  $(U_{ri},q_{ri})$ be the chart on $M,$ which contains $z_{ri},$ defined above. The projection map $q_{ri}$ induces the map $Tq_{ri}:TU_{ri}\to Tq_{ri}(U_{ri})$ between tangent bundles of $U_{ri}$ and $q_{ri}(U_{ri})\subset \RRR^2.$ Fix a Riemannian metric $\langle\cdot,\cdot\rangle$ on $M$ such that the following diagram is commutative
$$
\xymatrix{
    TU_{ri} \ar[rr]^{Tq_{ri}}  && Tq_{ri}(U_{ri}) \\
    U_{ri} \ar[u]^{\nabla f|_{U_{ri}}} \ar[rr]_{q_{ri}} && q_{ri}(U_{ri}) \ar[u]_{\nabla f_r|_{q_{ri}(U_{ri})}}
}
$$
where $\nabla f$ and $\nabla f_r$ are gradient fields of $f$ and $f_r$ in Riemannian metrics on $M$ and on $\RRR^2$ respectively. Let also $\mathbf{G}$ be the flow of $\nabla f$ on $M.$

\subsection*{Another description of the diffeomorphism $\lambda_1(h)$}
Let $x\in U_{ri}$ be a point, $i = 0,1,2,\ldots, k(r)$, $r = 1,2,\ldots s,$ and $y = \lambda_1(h)(x)$ be its image under $\lambda_1(h).$ Let also $\omega_x$ and $\omega_y$ be the trajectories of the gradient flow $\mathbf{G}$ such that $x\in\omega_x$ and $y\in\omega_y.$ Since $\lambda_1(h)$ preserves trajectories of the flow $\mathbf{G}$ in $\mathsf{U}$, it follows that $\lambda_1(h)(\omega_x\cap U_{ri}) = \omega_y\cap \lambda_1(h)(U_{ri}).$ By definition of $\lambda_1(h)$ we have that $f(x) = f(y).$ In particular, if the trajectory $\omega_x$ intersects some edge $R$ of $V$ at some point  $x',$ and $y' = \lambda_1(h)(x'),$ then $y = f^{-1}(f(x))\cap \omega_{y'},$ where $\omega_{y'}$ is the trajectory of $\mathbf{G}$, which passes through the point $y'$. Namely the image of $x$ is uniquely defined by the image of the point $x'.$

By Lemma \ref{lm:act}, the group $H$ acts on the set of all edges $\mathsf{R}$ of $V$. Assume that this action has $u$ orbits $\mathsf{R}_r = \{R_{r0},R_{r1},\ldots R_{r\, n(u)}\}$ for some $n(u)\in\mathbb{N}$ and $r = 1,2,\ldots, u.$ We also put $\mathsf{R} = \bigcup_{r = 1}^u\mathsf{R}_r.$
For each edge $R_{ri}$ fix
\begin{itemize}
    \item[(I)] a $C^{\infty}$-diffeomorphism $\ell_{ri}:(-1,1)\to R_{ri}$
    such that  restrictions $\ell_{ri}|_{(-1,-1+\varepsilon)}$ and $\ell_{ri}|_{(1-\varepsilon,1)}$ are isometries,
\end{itemize}
where $\varepsilon$ is the  radius of the disk $q_{r0}(U_{r0})$ defined in Step 1.

\begin{lemma}
    There exist a homomorphism $\lambda_2:\mathcal{H}\to \mathrm{Diff}(\mathcal{E}_{V,a})$ and a monomorphism $\chi_2:H\to \mathrm{Diff}(\mathcal{E}_{V,a})$ such that the following diagram is commutative:
    $$
    \xymatrix{
        \mathcal{H}\ar[rr]^-{\lambda_2} \ar[d]_{\phi} && \mathrm{Diff}(\mathcal{E}_{V,a})\\
        H \ar[urr]_-{\chi_2}&
    }
    $$
    and $\lambda_2(h)|_{\mathsf{U}} = \lambda_1(h)|_{\mathsf{U}\cap \mathcal{E}_{V,a}}.$
\end{lemma}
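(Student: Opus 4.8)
The plan is to construct $\lambda_2(h)$ by extending $\lambda_1(h)$ first over the edges of $V$, and then over the rest of the atom $\mathcal{E}_{V,a}$ along the trajectories of the gradient flow $\mathbf{G}$, following exactly the recipe recalled in the paragraph preceding the lemma: a point $x\in\mathcal{E}_{V,a}$ lies on a unique trajectory $\omega_x$ of $\mathbf{G}$; if $\omega_x$ crosses $V$ at an interior point $x'$ of an edge, then the image of $x$ is forced to be the point of the trajectory through the image of $x'$ lying on the level $f(x)$. So the only real choice is the $H$-action on the edges, and the flow dictates everything else. Shrinking the width $a$ (and then $\varepsilon$) I would arrange that $\mathsf{U}\subset\mathcal{E}_{V,a}$, that $\mathcal{E}_{V,a}$ contains no critical level component other than $V$, and hence that every trajectory of $\mathbf{G}$ in $\mathcal{E}_{V,a}$ either crosses $V$ transversally at an edge-interior point or limits onto a vertex of $V$ (a separatrix).

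\textbf{Stage 1: the edges.} By Lemma \ref{lm:act}, $H$ acts on the set of edges of $V$; I would list its orbits as $\mathsf{R}_r=\{R_{r0},\dots,R_{r\,n(r)}\}$ and, as in Step~1, fix $h_{ri}\in\mathcal{H}$ with $h_{ri}(R_{r0})=R_{ri}$. As in the Step~1 lemma, $\lambda_1(h)$ depends only on $\phi(h)$, and on the parts of the edges lying in $\mathsf{U}$ it is, in the model charts, the isometry $A_h$ (equal to $\pm\id$ for a $1$-saddle, a rotation for an $n$-saddle with $n\geq 3$), which in the parameters $\ell_{ri}$ — isometric near the endpoints — acts near each end of an edge as a translation. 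By condition (C) and Lemma \ref{lm:part} no edge gets its orientation reversed, so near each end of $R_{r0}$ this germ extends to an orientation-preserving diffeomorphism of $(-1,1)$; I would interpolate these two germs by a standard construction, and transport the result along the orbit by the $h_{ri}$. This produces $\lambda_R(h)$, carrying each edge of $V$ diffeomorphically onto its $h$-image, agreeing with $\lambda_1(h)$ on $\mathsf{U}$, and — by the kernel argument of the Step~1 lemma — giving a homomorphism $\lambda_R\colon\mathcal{H}\to\mathrm{Diff}(V)$ and a monomorphism of $H$.

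\textbf{Stage 2: the atom.} For $x$ whose trajectory $\omega_x$ crosses $V$ at an edge-interior point $x'$, I would set $\lambda_2(h)(x):=\mathbf{G}_{\tau}\bigl(\lambda_R(h)(x')\bigr)$, where $\tau$ is the unique time with $f\bigl(\mathbf{G}_{\tau}(\lambda_R(h)(x'))\bigr)=f(x)$; for $x\in\mathsf{U}$ I would set $\lambda_2(h)(x):=\lambda_1(h)(x)$. The two prescriptions agree on their common domain — this is precisely the content of the paragraph before the lemma — so $\lambda_2(h)$ is defined and smooth on $\mathsf{U}$ together with the open dense set of trajectories crossing $V$; on the separatrices its value is forced by continuity from the crossing trajectories, and near the vertex onto which a separatrix limits it coincides with $\lambda_1(h)$, hence is smooth there. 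A flow-box and propagation argument then shows the resulting map is $C^{\infty}$ on all of $\mathcal{E}_{V,a}$ and a diffeomorphism preserving $f$. Since $\lambda_R$ is a homomorphism and the flow transport is functorial, $\lambda_2$ is a homomorphism on the crossing trajectories, hence on all of $\mathcal{E}_{V,a}$; moreover $\ker\lambda_2=\ker\phi$, since $\lambda_2(h)=\id$ forces $\lambda_1(h)=\id$ on $\mathsf{U}$ and so $h\in\ker\phi$ by the Step~1 lemma, while conversely an $h\in\ker\phi$ fixes every edge with its orientation (condition (C)), whence $\lambda_R(h)=\id$ and $\lambda_2(h)=\id$. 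Setting $\chi_2(g):=\lambda_2(h)$ for any $h$ with $\phi(h)=g$ then yields the required monomorphism $\chi_2\colon H\to\mathrm{Diff}(\mathcal{E}_{V,a})$ with $\chi_2\circ\phi=\lambda_2$, and the equality $\lambda_2(h)|_{\mathsf{U}}=\lambda_1(h)|_{\mathsf{U}}$ holds by construction.

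\textbf{Where the difficulty lies.} The main obstacle is the regularity of $\lambda_2(h)$ along $V$ and along the separatrices limiting onto its vertices: the flow $\mathbf{G}$ degenerates at an $n$-saddle with $n\geq 2$, so the flow-transport formula is unusable precisely there. This is exactly why the construction must switch, inside the neighbourhoods $U_{ri}$, to the linear-in-charts description $\lambda_1(h)$, which is manifestly $C^{\infty}$, and why the compatibility statement of the paragraph before the lemma — together with the requirement that $\mathsf{U}$ absorb the part of $V$ near which the hitting points of trajectories accumulate — is what is needed to glue the flow description and the chart description into one smooth diffeomorphism. A lesser, but still real, difficulty, already present in Stage~1, is making the interpolation over the middles of the edges equivariant; this is dealt with as in Step~1, using the representatives $h_{ri}$ and the fact (from condition (C) and Lemma \ref{lm:part}) that orientations of edges are never reversed.
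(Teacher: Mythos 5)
Your overall architecture is the paper's: keep $\lambda_1(h)$ on $\mathsf{U}$, and extend over the rest of the atom by transporting along trajectories of the gradient flow, the image of a non-chart point being the point at the same $f$-level on the trajectory through the image of the edge point (with trajectories converging to a vertex handled through the vertex chart — the paper treats this as a separate explicit case rather than by continuity). The kernel argument for $\chi_2$ and the definition $\chi_2(g)=\lambda_2(h)$ are also the same. The one place where you genuinely deviate is the definition of the map on the edges, and that is where your argument has a real gap. The paper does \emph{not} interpolate germs of $\lambda_1(h)$: it uses the parametrizations $\ell_{ri}\colon(-1,1)\to R_{ri}$ fixed in condition (I) and defines the edge map as $h'=\ell'\circ\ell^{-1}$, where $\ell,\ell'$ are the fixed parametrizations of $R$ and $R'=h(R)$. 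This map depends only on the combinatorial data (which edge goes to which edge), so the composition rule $\lambda_2(h_2\circ h_1)=\lambda_2(h_2)\circ\lambda_2(h_1)$ and the independence of the choice of $h$ over $\phi(h)$ come for free; the role of the isometry requirement in (I) is then to make this canonical edge map fit the chart description near the vertices, where $\lambda_1(h)$ is given by the (iso)metric linear maps $A_h$ of Lemma \ref{lm:ho}.

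Your interpolation scheme reverses these advantages: agreement with $\lambda_1(h)$ on $\mathsf{U}$ is automatic, but the homomorphism property of your $\lambda_R$ (hence of $\lambda_2$) is precisely what an arbitrary interpolation does not give, and saying it is ``dealt with as in Step~1, using the representatives $h_{ri}$'' is not an argument: Step~1 involves no interpolation at all — $\lambda_1$ is multiplicative because $A_{h_2\circ h_1}=A_{h_2}\circ A_{h_1}$ holds for the canonical linear parts, and choosing representatives $h_{ri}$ by itself does not make a family of interpolations compatible with composition (one must at least analyse the $H$-stabilizer of an edge, including the possibility that an element of $\mathcal{H}$ maps an edge to itself reversing its ends, which condition (C) and Lemma \ref{lm:part} do not rule out for maps fixing no $2$-dimensional element). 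So as written your Stage~1 does not yield a homomorphism $\lambda_R\colon\mathcal{H}\to\mathrm{Diff}(V)$ depending only on $\phi(h)$; the simplest repair is to replace the interpolation by the paper's canonical identification $\ell'\circ\ell^{-1}$ coming from (I), after which your Stage~2 goes through essentially verbatim.
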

\begin{proof}
    Let $h\in\mathcal{H}.$  We will extend the diffeomorphism $\lambda_1(h)$ to a diffeomorphism $\lambda_2(h)$ of the atom $\mathcal{E}_{V,a}$.
    Let $x\in \mathcal{E}_{V,a}$ be any point. If $x\in U_{ri}$ for some $i = 0,1,\ldots k(r)$, $r = 1,2,\ldots s,$ then we put
    $\lambda_2(h)(x) :=\lambda_1(h)(x).$

    Suppose that $x\not \in \mathsf{U}$. Let $\omega_x$ be a trajectory of the flow $\mathbf{G}$ passing through the point $x$. Then we have one of the following two cases: the trajectory $\omega_x$ either
    \begin{enumerate}
        \item intersects  some edge $R$ of $V$ at a point, say $y$, or
        \item converges to some vertex $z$ of $V$.
    \end{enumerate}

    In the case (1)
    let $R' = h(R)$, $\ell:(-1,1)\to R$ and $\ell':(-1,1)\to R'$ be maps, defined by  (I) for $R$ and $R'$ respectively, and
    $$
    h' = \ell{'}\circ \ell^{-1}:  R \xrightarrow{~~\ell^{-1}~~}(-1,1)\xrightarrow{~~\ell'~~} R'.
    $$
    Let also $y' = h'(y)\in R'$,  $\omega_{y'}$ be the trajectory of $\mathbf{G},$ which passes through $y',$ and $x'$ be a unique point in $\omega_{y'}$ such that $f(x) = f(x')$.
    Then we put $\lambda_2(h)(x) = x'.$

    Consider the case (2).
    Let $U$ be the neighborhood of $z$, defined in Step 1,  $z' = \lambda_1(h)(z)$ be the corresponding point in $U'= \lambda_1(h)(U),$  $\omega_{z'}$ be the trajectory of $\mathbf{G}$ such that $\omega_{z'}\cap U' = \lambda_1(h)(\omega_x \cap U),$ and $x'$ be a unique point in $\omega_{z'}$ such that $f(x) = f(x')$. In this case we define $\lambda_2(h)$ by the rule: $\lambda_2(h)(x) = x'$.

    By definition $\lambda_2(h)|_{\mathsf{U}} = \lambda_1(h)|_{\mathsf{U}\cap \mathcal{E}_{V,a}}.$ Let $\chi_2:H\to \mathrm{Diff}(\mathcal{E}_{V,a})$ be the map defined as follows: for $g\in H$ and $h\in \mathcal{H}$ such that $\phi(h) = g$, we put $\chi_2(g) = \lambda_2(h).$ It is easy to check that the map $\lambda_2$ is a homomorphism. Moreover $\lambda_2(h) = \id_{\mathcal{E}_{V,a}}$ iff $\lambda_1(h) = \id_{\mathsf{U}}.$ Therefore $\chi_2$ is a monomorphism.
\end{proof}
Define the map $\sigma:H\times \mathcal{E}_{V,a}\to \mathcal{E}_{V,a}$ by the rule
  $$
\sigma(g,x) = \chi_2(g)(x).
$$
Since $\chi_2$ is the homomorphism, it follows that the map $\sigma$ is an $H$-action on the atom $\mathcal{E}_{V,a}.$

\subsection*{Step 3}
In this step we extend the $H$-action $\sigma$ on the atom $\mathcal{E}_{V,a}$ to the $H$-action on the surface $M.$
We start with some preliminaries. Let $\mathsf{E}$ be a set of $2$-dimensional elements of $\Xi.$ By Lemma \ref{lm:act} the group $H$ acts on the set $\mathsf{E}$. Assume that this action has $y$ orbits $\mathsf{E}_r = \{E_{r0},E_{r1},\ldots, E_{r\,k(r)}\}$, $i = 0,1,\ldots,k(r)$, and $r = 1,2,\ldots y.$ We also put $\mathsf{E} = \bigcup_{r = 1}^y \mathsf{E}_r.$
Fix diffeomorphisms $h_{ri}\in\mathcal{H}$ such that $h_{ri}(E_{r0}) = E_{ri}.$

Let $Y_r = E_{r0}\cap f^{-1}([-a,-a/2]\cup [a/2,a])\cap \mathcal{E}_{V,a}.$ Since $v$ is a special vertex, it follows that the set $Y_r$ is a cylinder. We put $Y_{ri} = h_{ri}(Y_r)$, and $Y = \bigcup_{r = 1}^y\bigcup_{i = 0}^{k(r)}Y_{ri}.$

We choose $a_1 > a$ such that the set $\mathcal{E}_{V,a_1}$ is also an atom of $V$. Let
$$
Z_r = E_{r0}\cap f^{-1}([-a_1,a/2]\cup [a/2, a_1])\cap \mathcal{E}_{V,a_1}.
$$
By definition, we have that $Y_r\subset Z_r,$ and $Z_r$ does not contain critical points of $f$. We also put $Z_{ri} = h_{ri}(Y_r)$ and $Z  = \bigcup_{r = 1}^y\bigcup_{i = 0}^{k(r)}Z_{ri}.$

\begin{figure}[h]
    \center{\includegraphics[width=0.4\linewidth]{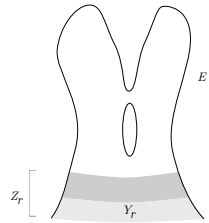}}
    \caption{The $2$-dimensional component $E_{r0}$, and its subsets $Y_r$ and $Z_r$.}
    \label{fig:hC_not_C}
\end{figure}

Fix a vector field $F$ on $Z$ such that its orbits coincide with connected components of level sets of the restriction $f|_{Z},$ and let $\mathbf{F}$ be the flow of $F.$
Then for each smooth function $\alpha\in C^{\infty}(M)$ we can define the following map
$$
\mathbf{F}_{\alpha}:M\to M,\quad \mathbf{F}_{\alpha}(x) = \mathbf{F}(x,\alpha(x)).
$$
Such maps have been studied in \cite{Maktymenko:smooth-shifts:2003}.

Since all orbits of $\mathbf{F}$ are closed, it follows from \cite[Theorem 19]{Maktymenko:smooth-shifts:2003} that the map $\mathbf{F}_{\alpha}$ is a diffeomorphism, iff the Lie derivative $F\alpha$ of $\alpha$ along $F$ satisfies the condition:
$F\alpha > -1.$ Moreover we have that $(\mathbf{F}_{\alpha})^{-1} = \mathbf{F}_{\xi}$, where
\begin{equation}\label{eq;invers}
\xi = -\alpha\circ \mathbf{F}_{\alpha}^{-1}.
\end{equation}

\begin{lemma}
    For each $g\in H$ the map $\chi_2(g)$ extends  to a diffeomorphism $\Sigma(g)\in\mathcal{S}(f)$, so that  the correspondence $g\mapsto \Sigma(g)$ is a homomorphism   $\Sigma:H\to \mathcal{S}(f)$.
\end{lemma}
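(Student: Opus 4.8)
\emph{Proof proposal.}

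The plan is to build $\Sigma(g)$ by cutting $M$ into three regions: the atom $\mathcal E_{V,a}$, where we keep $\chi_2(g)$; a far exterior, where we use honest products of the chosen diffeomorphisms $h_{ri}\in\mathcal H$; and the thin collars $Z_{ri}$ in between, where we interpolate by flow shifts along $\mathbf F$. Write $N=\overline{M\setminus\mathcal E_{V,a}}$. Since $v$ is special, $N$ is a disjoint union of surfaces $N_{E}$, one for each $2$-dimensional element $E\in\mathsf E$, each meeting $\mathcal E_{V,a}$ along a single circle $A_{E}\subset\partial\mathcal E_{V,a}$, and $Z_{ri}$ straddles $A_{E_{ri}}$, with $Y_{ri}$ inside the atom and $Z_{ri}\setminus Y_{ri}$ inside $N_{E_{ri}}$. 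A first observation I would use is that the $H$-action on $\mathsf E$ of Lemma \ref{lm:act} is in fact \emph{free}: if $g\in H$ fixes some $E\in\mathsf E$, then by (C) every lift $h\in\mathcal H$ of $g$ fixes all of $\Xi$, hence $\rho(h)$ fixes $v$ and every branch of $\mathrm{st}(v)$ (branches correspond to $2$-dimensional elements of $\Xi$ because $v$ is special); compatibility of $\rho(h)$ with the height $\phi'$ then forces $\rho(h)|_{\mathrm{st}(v)}=\id$, i.e. $g=\phi(h)=1$. Hence in each orbit $\mathsf E_r=\{E_{r0},\dots,E_{r\,k(r)}\}$ there is, for each $i$, a unique $g_i\in H$ with $g_i(E_{r0})=E_{ri}$, the chosen $h_{ri}$ satisfies $\phi(h_{ri})=g_i$, and $\chi_2(g_i)=\lambda_2(h_{ri})$.

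Next I would manufacture the collar corrections. On $Y_{r0}$ the map $h_{ri}^{-1}\circ\chi_2(g_i)=h_{ri}^{-1}\circ\lambda_2(h_{ri})$ is an $f$-preserving self-diffeomorphism of $Y_{r0}$ which, by property (C) together with Lemma \ref{lm:part}, preserves the orientation of each level circle of $f$; as these circles are exactly the orbits of $\mathbf F$, we may write $h_{ri}^{-1}\circ\chi_2(g_i)=\mathbf F_{\beta_i}$ on $Y_{r0}$ for a unique $\beta_i\in C^\infty(Y_{r0})$, and by the ``only if'' part of \cite[Theorem 19]{Maktymenko:smooth-shifts:2003} one has $F\beta_i>-1$ on $Y_{r0}$. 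Extending $\beta_i$ smoothly a little past the levels $\pm a$ and multiplying by a cut-off $\psi\in C^\infty(M)$ in the variable $f$ with $\psi\equiv 1$ on $f^{-1}([-a,a])$ and $\psi\equiv 0$ near the outer ends $Z_{r0}\cap f^{-1}(\{\pm a_1\})$, I obtain $\alpha_i:=\psi\cdot\beta_i\in C^\infty(Z_{r0})$ with $\alpha_i=\beta_i$ on $Y_{r0}$, $\alpha_i=0$ near the outer ends, and $F\alpha_i=\psi\cdot F\beta_i>-1$ everywhere (here $Ff=0$ so $F$ only sees $\beta_i$, and $\psi\in[0,1]$ keeps $\psi F\beta_i>-1$ on the small region where $F\beta_i$ may be negative). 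By \cite[Theorem 19]{Maktymenko:smooth-shifts:2003} each $\mathbf F_{\alpha_i}$ is then a diffeomorphism of $Z_{r0}$, with inverse $\mathbf F_{\xi_i}$, $\xi_i=-\alpha_i\circ\mathbf F_{\alpha_i}^{-1}$, by (\ref{eq;invers}).

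Then I would set, for $g\in H$: $\Sigma(g):=\chi_2(g)$ on $\mathcal E_{V,a}$; $\Sigma(g):=h_{rk}\circ h_{rj}^{-1}$ on $N_{E_{rj}}\setminus Z_{rj}$ where $g(E_{rj})=E_{rk}$; and $\Sigma(g):=h_{rk}\circ\mathbf F_{\alpha_k}\circ\mathbf F_{\xi_j}\circ h_{rj}^{-1}$ on the collar $Z_{rj}$. Near the outer ends $\alpha_k=\xi_j=0$, so there $\Sigma(g)=h_{rk}h_{rj}^{-1}$ and the collar piece glues with the $N$-piece; on $Y_{rj}$ one has $\mathbf F_{\alpha_k}|_{Y_{r0}}=\mathbf F_{\beta_k}=h_{rk}^{-1}\chi_2(g_k)$ and $\mathbf F_{\xi_j}|_{Y_{r0}}=(\mathbf F_{\beta_j})^{-1}=\chi_2(g_j)^{-1}h_{rj}$, whence $\Sigma(g)|_{Y_{rj}}=\chi_2(g_k)\chi_2(g_j)^{-1}=\chi_2(g_kg_j^{-1})=\chi_2(g)$ and the collar piece glues with the atom piece; so $\Sigma(g)$ is a well-defined diffeomorphism of $M$. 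It lies in $\mathcal S_V(f)$ because $\chi_2(g)$, $h_{rk}h_{rj}^{-1}$ and every $\mathbf F_\alpha$ preserve $f$ and $V$ (orbits of $\mathbf F$ lie in level sets of $f$), and $\phi(\Sigma(g))=g$ because $\Sigma(g)=\chi_2(g)=\lambda_2(h)$ near $V$ and $\lambda_2(h)$ induces $g$ on $\mathrm{st}(v)$. Finally $\Sigma$ is a homomorphism: clear on $\mathcal E_{V,a}$ (since $\chi_2$ is) and on $N_{E_{rj}}\setminus Z_{rj}$ (telescoping of the $h$'s), while on $Z_{rj}$, with $g_2(E_{rj})=E_{rk}$ and $g_1(E_{rk})=E_{rm}$,
\[
\Sigma(g_1)\circ\Sigma(g_2)\big|_{Z_{rj}}=(h_{rm}\mathbf F_{\alpha_m}\mathbf F_{\xi_k}h_{rk}^{-1})\circ(h_{rk}\mathbf F_{\alpha_k}\mathbf F_{\xi_j}h_{rj}^{-1})=h_{rm}\mathbf F_{\alpha_m}\mathbf F_{\xi_j}h_{rj}^{-1}=\Sigma(g_1g_2)\big|_{Z_{rj}},
\]
the inner pair $\mathbf F_{\xi_k}\circ\mathbf F_{\alpha_k}$ cancelling by the definition of $\xi_k$. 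Taking $s:=\Sigma$ then proves Theorem \ref{th:main}.

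The hard part, I expect, is the middle step: representing $h_{ri}^{-1}\circ\chi_2(g_i)$ on $Y_{r0}$ as a genuine flow shift $\mathbf F_{\beta_i}$ with $F\beta_i>-1$, and then extending $\beta_i$ through the boundary levels $\pm a$ to a shift $\alpha_i$ still satisfying $F\alpha_i>-1$ while dying near the outer ends — this is precisely where the structure of $\FFF(M)$ is used, via the orientation control of Lemma \ref{lm:part} and property (C) and via the shift-diffeomorphism criterion of \cite[Theorem 19]{Maktymenko:smooth-shifts:2003}. The second, more structural, point is that multiplicativity dictates the telescoping form $h_{rk}\mathbf F_{\alpha_k}\mathbf F_{\xi_j}h_{rj}^{-1}$ on the collars, and that this is consistent only because the $H$-action on $\mathsf E$ is free, so that each $g$ is recorded within an orbit by a single index and the correction need be built just once, from the base point. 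The remaining work is mostly the bookkeeping that these two ingredients dovetail: well-definedness across the level-$\pm a$ and level-$\pm a_1$ interfaces, and smoothness of the glued diffeomorphism.
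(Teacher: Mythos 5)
Your construction is essentially the paper's own proof: your $\beta_i$, its cut-off extension $\alpha_i$, and the glued maps $h_{rk}\circ\mathbf{F}_{\alpha_k}\circ\mathbf{F}_{\xi_j}\circ h_{rj}^{-1}$ are exactly the paper's $\xi_{ri}$ (Lemma \ref{lm:ch2}, the shift representation on the cylinders $Y_r$ via property (C), Lemma \ref{lm:part} and Maksymenko's criterion), $w_{ri}$ (Lemma \ref{lm:ext}, Seeley extension plus an $F$-invariant cut-off), and the telescoping $\tilde h_{rk}\circ\tilde h_{rj}^{-1}$ with $\tilde h_{ri}=h_{ri}\circ w_{ri}$, glued to $\chi_2(g)$ on the atom $\mathcal{E}_{V,a}$. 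The only cosmetic differences are that freeness of the $H$-action on the $2$-dimensional elements is already part of Lemma \ref{lm:act}, so it need not be re-derived, and that (as in the paper's choice of $b\in(a,a_1)$) your cut-off $\psi$ must vanish outside the small region past the levels $\pm a$ on which the extended $\beta_i$ still satisfies $F\beta_i>-1$.
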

\begin{proof}
    We will need the following two lemmas.
    \begin{lemma}\label{lm:ch2}
        Let  $g\in H$ and $h\in \mathcal{H}$ be such that $\phi(h) = g$, and  $h(E_{r0}) = E_{ri}.$ Then there exists a unique $C^{\infty}$-function $\xi_{ri}:Y_{r}\to \RRR$ such that
        $$
        \chi_2(g)|_{Y_r} = h_{ri}|_{Y_r}\circ \mathbf{F}_{\xi_{ri}}:Y_{r}\to Y_r.
        $$
        In particular, the function $\xi_{ri}$ depends only on $g.$
    \end{lemma}
    \begin{lemma}\label{lm:ext}
        The diffeomorphism $\mathbf{F}_{\xi_{ri}}$ extends to a diffeomorphism $w_{ri}:E_{r0}\to E_{r0}$ such that $f\circ w_{ri} = f$ on $E_{r0}.$
    \end{lemma}
    \noindent We prove Lemma \ref{lm:ch2} and Lemma \ref{lm:ext} bellow, and now we will complete Theorem \ref{th:main}.

    Define a diffeomorphism $\tilde{h}_{ri}:E_{r0}\to E_{ri}$  by the formula:
    $$
    \tilde{h}_{ri} = h_{ri}\circ w_{ri}.
    $$

    Let $h\in\mathcal{H}$. Define the diffeomorphism $\lambda_3(h)$ by the rule: if $h(E_{ri}) = E_{rj},$ then
    $$
    \lambda_3(h)|_{E_{ri}} = \tilde{h}_{rj}\circ \tilde{h}_{ri}^{-1}:E_{ri}\xrightarrow{~~\tilde{h}_{ri}^{-1}~~} E_{r0}\xrightarrow{~~\tilde{h}_{rj}~~} E_{rj}.
    $$
    It follows from Lemma \ref{lm:ext} that $\lambda_3(h)$ coincides with $\lambda_2(h)$ on $Y.$

    Now we will check that the correspondence $h\mapsto\lambda_3(h)$ is a homomorphism. Let $h_1$ and $h_2$ be homeomorphisms from $\mathcal{H}$ such that $h_1(E_{ri}) = E_{rj}$ and $h_2(E_{rj}) = E_{rk}.$ By definition $\lambda_3(h_1)|_{E_{ri}} = \tilde{h}_{rj}\circ \tilde{h}^{-1}_{ri}$, and $\lambda_3(h_2)|_{E_{rj}} = \tilde{h}_{rk}\circ \tilde{h}_{rj}^{-1}.$ Then $\lambda_3(h_2)|_{E_{rj}}\circ \lambda_3(h_1)|_{E_{ri}} = \tilde{h}_{rk}\circ \tilde{h}_{rj}^{-1}\circ \tilde{h}_{rj}\circ \tilde{h}^{-1}_{ri} = \tilde{h}_{rk}\circ\tilde{h}_{ri}^{-1} = \lambda_3(h_2\circ h_1)|_{E_{ri}}.$
    Hence, the map $\lambda_3$ is a homomorphism.

    Let $g\in H$, and $h\in\mathcal{H}$ be such that $\phi(h) = g.$ By condition (C), $\lambda_3(h) = \id$ iff $h(E_{ri}) = E_{ri}$ for some $r = 1,2,\ldots y$, and $i = 0,1,\ldots k(r).$ So the map $\chi_3:H\to \mathrm{Diff}(\mathsf{E}),$ defined by $\chi_3(g) = \lambda_3(h),$ is a monomorphism.

    Let $\sigma':H\times \mathsf{E}\to \mathsf{E}$ be the map given by the formula
    $$
    \sigma'(g, x) = \chi_3(g)(x),\quad x\in\mathsf{E}.
    $$
    Since $\chi_3$ is a homomorphism, it follows that $\sigma'$ is an $H$-action on $\mathsf{E}.$

    Hence, we define an $H$-action $\Sigma:H\times M\to M$ on $M$ by the rule:
    $$
    \Sigma = \begin{cases}
    \sigma',& \text{on }  H\times\mathsf{E},\\
    \sigma, & \text{on } H\times\mathcal{E}_{V,a}.
    \end{cases}
    $$
    Theorem \ref{th:main} is proved.
\end{proof}

\subsection*{Proof of Lemma \ref{lm:ch2}}
Due to \cite[Lemma 4.12.]{Maktymenko:orb-stab:200}, for the diffeomorphism $h'_{ri} =$ \break  $\chi_2^{-1}(g)|_{Y_{ri}}\circ h_{ri}|_{Y_r}:Y_r\to Y_r$ of the cylinder there exists a smooth function $\alpha_{ri}$ such that $h'_{ri} = \mathbf{F}_{\alpha_{ri}}$ whenever
for each trajectory $\omega$ of $\mathbf{F}$ we have that $h'_{ri}(\omega) = \omega$ and $h'_{ri}$ preserves orientation of $\omega$.

Let $\omega$ be a trajectory of $\mathbf{F}.$
It follows from condition (C) that
$\omega = f^{-1}(t)\cap Y_r$ for some $t\in\RRR.$ The sets $f^{-1}(t)$ and $Y_r$ are $h'_{ri}$-invariant, so the set $f^{-1}(t)\cap Y_r$ is also $h'_{ri}$-invariant. Hence,  $h_{ri}'(\omega) = \omega$ for all trajectories of $\mathbf{F}.$ Moreover, by Lemma \ref{lm:part}, $h'_{ri}$ preserves orientations of each orbit $\omega$ of $\mathbf{F}.$ Thus $h'_{ri} = \mathbf{F}_{\alpha_{ri}}$, and due to (\ref{eq;invers}) we put $\xi_{ri} = -\alpha_{ri}\circ \mathbf{F}_{\alpha_{ri}}^{-1}.$
Lemma is proved.
\subsection*{Proof of  Lemma \ref{lm:ext}}
By the result of Seeley \cite{Seeley:1964}, the function $\xi_{ri}$ extends to  some smooth function $\beta'_{ri}$ on $E_{ri}.$
It is easy to construct  a function $\delta_{r}\in C^{\infty}(E_{r0},[0,1])$ which satisfies the following conditions:
\begin{enumerate}
    \item $\delta_r = 1$ on $Y_r$,
    \item $\delta_r = 0$ on some neighborhood of $Z_r\cap (f^{-1}(-a_1)\cup f^{-1}(a_1))$.
    \item $F\delta_r  = 0$, i.e., $\delta_r$ is constant along orbits of $F,$
    \item the function $\beta_{ri}  = \delta_r\beta'_{ri}$, $i = 1,2,\ldots, n$ satisfies the inequality $F\beta_{ri}|_{Z_{r}} > -1$.
\end{enumerate}
Indeed, since $\beta'_{ri} = \xi_{ri}$ on $Y_{ri}$ and $\mathbf{F}_{\alpha_{ri}}$ is a diffeomorphism, it follows that $F\beta'_{ri}> -1$ on $Y_{ri}.$ Then there exists $b\in (a, a_1)$ such that  $F\beta'_{ri} > -1$ on   $A_r = E_{r0}\cap f^{-1}([-b, -a/2]\cup [a/2,b])\cap \mathcal{E}_{V,a}$.  Let $\delta_r:E_{r0}\to [0,1]$ be a smooth function such that $\delta_r = 1$ on $Y_r$, $\delta_r = 0$ on $E_{r0}\setminus A_{r},$ and $F\delta_r = 0.$ Then $\delta F\beta'_{ri} > -1$ on $E_{ri}.$
Now the required diffeomorphism $w_{ri}:{E_{r0}}\to E_{r0}$ can be defined by the formula
$$
w_{ri}(x) = \begin{cases}
\mathbf{F}_{\beta_{ri}}, & x\in Z_{r},\\
x, & x\in E_{r0}\setminus Z_r.
\end{cases}
$$
Lemma is proved.

{\it {Acknowledgments.}}
The author is grateful to Sergiy Maksymenko and Eugene Polu\-lyakh for useful discussions.

\end{document}